\newcommand{\rar}{\rightarrow}
\newcommand{\cl}{\mathcal}
\newcommand{\llb}{\llbracket}
\newcommand{\rrb}{\rrbracket}
\newcommand{\bs}[1]{\boldsymbol{#1}}
\definecolor{deepgreen}{cmyk}{1,0,1,0.5}
\newcommand{\ta}{\tau}
\newcommand{\p}{\partial}
\newcommand{\Rmnum}[1]{\expandafter\@slowromancap\romannumeral #1@}
\newcommand{\Del}[1]{}
\numberwithin{equation}{section}
\newtheorem{thm}{Theorem}[section]
\newtheorem{prop}[thm]{Proposition}
\theoremstyle{remark}
\newtheorem{defn}[thm]{Definition}
\definecolor{green}{rgb}{0,0.8,0} 
\newcommand{\eps}{\epsilon}
\newcommand{\bbE}{\mathbb E}
\newcommand{\bbR}{\mathbb R}
\begin{document}

\title[Stretch-limited Motion]{Longitudinal Shock Waves in a Class of Semi-infinite Stretch-Limited Elastic Strings}

\author{Casey Rodriguez}

\begin{abstract}
In this paper, we initiate the study of wave propagation in a recently proposed mathematical model for stretch-limited elastic strings. We consider the longitudinal motion of a simple class of uniform, semi-infinite, stretch-limited strings under no external force with finite end held fixed and prescribed tension at the infinite end. We study a class of motions such that the string has one inextensible segment, where the local stretch is maximized, and one extensible segment. The equations of motion reduce to a simple and novel shock front problem in one spatial variable for which we prove existence and uniqueness of local-in-time solutions for appropriate initial data. We then prove the orbital asymptotic stability of an explicit two-parameter family of piece-wise constant stretched motions. If the prescribed tension at the infinite end is increasing in time, our results provide an open set of initial data launching motions resulting in the string becoming fully inextensible and tension blowing up in finite time. 
\end{abstract}

\maketitle

\section{Introduction}

The exact equations for planar motion of mechanical strings were derived by Euler (1751) and for general motion in three-dimensional space by Lagrange (1762), and they have been a rich source of research for natural philosophers and mathematicians (see \cite{AntmanBook}). In standard treatments, a string is \emph{elastic} if the tension is given as an explicit function of the stretch of the string (see \cite{Antman79, AntmanBook}). This then implies the existence of a stored energy functional which by the first law of thermodynamics implies mechanical work is not converted to heat during the string's motion (i.e. there is no dissipation). 

In a series of papers \cite{Raj_Implicit03, RajConspectus, RajElastElast}, Rajagopal recognized within the context of full three-dimensional continuum mechanics that the absence of dissipation is not predicated upon assuming the stress variable is an explicit function of the strain variable. More precisely, if we define \emph{elasticity} as the incapability of dissipation, then elastic bodies can be modeled by a wider class of relations between the stress variable and strain variable than those where the stress is a function of strain. This generalized notion of elasticity has had numerous applications in the modeling of electro and magneto-elastic bodies \cite{BustRaj_Elasto13, BustRaj_Magneto15}, fracture in brittle materials \cite{GouMallRajWalton_PlaneCrack15, BulMalekRajWal_Existence15}, gum metal \cite{Raj_GumMetal14} and many other materials (see the recent review \cite{RajBust20} for further references).
 
In \cite{Rod20}, we initiated the study of these generalized relations for strings by considering a class of \emph{stretch-limited} elastic strings. For these strings, the stretch is an explicit function of the tension of the string, but not vice versa, and the stretch is constrained between two limiting values. Therefore, segments of stretch-limited strings can neither be stretched nor compressed beyond certain lengths. Put another way, if one fixes one end and pulls hard enough on the other end of a stretch-limited string, then the string becomes completely taut and incapable of further stretching (a common experience with real strings). These properties are incapable of being modeled within the standard framework for elastic strings. In \cite{Rod20} we studied stationary configurations of finite stretch-limited strings suspended between two points under the force of gravity (catenaries) containing both inextensible segments (where the stretch is maximized) and extensible segments. 

In this work we study the dynamic longitudinal motion of a simple class of uniform, semi-infinite, tensile, stretch-limited elastic strings with finite end held fixed, prescribed tension at the infinite end (the end at infinity being pulled taut),  and containing one inextensible segment and one extensible segment. A brief overview of the formulation of the problem is as follows (see Section 2 for more details). Let $I = [0,\infty)$ parameterize the particles of the string and denote the tension by $N$ and stretch by $\nu$.  We denote the position of the particle $s$ along the $\bs i$ axis at time $t$ by $\chi(s,t)$ so that the stretch $\nu(s,t) = \chi_s(s,t)$.  The variables $\chi_s$ and $N$ are related via
\begin{align}
	\chi_s(s,t) =
	\begin{cases} 
		\frac{1}{E}N(s,t) &\mbox{ if } 0 < N(s,t) \leq N_1, \\
				\nu_1 &\mbox{ if } N(s,t) \geq N_1,
	\end{cases}\label{eq:chiN_relation}
\end{align}
where $\nu_1 > 1$ is the maximal stretch of the string, $N_1 > 0$ is the threshold tensile force resulting in inextensibility and $E = N_1/\nu_1$.
The equations of motion reduce to the simple balance law: 
\begin{align}
\begin{split}
&\gamma \chi_{tt}(s,t) = N_s(s,t), \quad (s,t) \in [0,\infty) \times [0,T], \\
&\chi(0,t) = 0, \quad N(\infty,t) = \zeta t + \tau, 
\end{split}\label{eq:1d_cons}
\end{align}
where $\gamma > 0$ is the mass density and $\zeta \geq 0$, $\tau \in (0,N_1)$ determine the prescribed tension at the infinite end. 
When the tension is a function of stretch, $N(s,t) = \hat N(\chi_s(s,t))$, as in the classical treatment of elastic strings, \eqref{eq:1d_cons} falls within the well-studied realm of hyperbolic balance laws of one spatial variable (see e.g. \cite{Dafermos_HypCons16}). However, the balance law \eqref{eq:1d_cons} with relation between the state variables given by \eqref{eq:chiN_relation} is, to the author's knowledge, novel. We assume that the string for each time $t$ contains an inextensible segment parameterized by $s \in [0,\sigma(t)]$ where the tension $N(\cdot,t) \in C^1([0,\sigma(t)])$ exceeds the threshold for inextensibility $$N(s,t) > N_1 > 0, \quad s \in [0,\sigma(t)],$$ and an extensible segment parameterized by $s \in [\sigma(t),\infty)$ where $N(\cdot,t) \in C^1([\sigma(t),\infty))$ and $$0 < N(s,t) = E \chi_s(s,t) < N_1, \quad s \in [\sigma(t),\infty).$$ We further assume the (generic) condition 
\begin{align}
N(\sigma^-(t),t) > N_1 > N(\sigma^+(t),t) \label{eq:jump1}
\end{align}
holds. Then the point of transition $s = \sigma(t)$ between segments is a shock front, and therefore, these motions do dissipate energy due to entropy production (see Proposition \ref{p:diss}) in contrast to motions which have continuous state variables (see Section 2).

The organization and main results of this work are as follows. In Section 2 we briefly review the general theory of mechanical strings, give the precise formulation of the problem we consider and derive the reduced equations of longitudinal motion. In Section 3 we define the class of classical shock solutions we study and prove the local-in-time existence and uniqueness of shock solutions to \eqref{eq:1d_cons} for classical shock front initial data $(\chi, \p_t \chi, N) |_{t = 0}$ (see Theorem \ref{p:wellposedness}).  The boundary condition at $s = 0$ and the assumption of longitudinal motion effectively reduces the proof to solving a linear wave equation for $\chi$ in the extensible region $\{(s,t) : t \in [0,T], s \in [\sigma(t),\infty)\}$ with $\chi(\sigma(t),t) = \nu_1 \sigma(t)$. The condition \eqref{eq:jump1} implies the shock speed $\sigma'$ is greater than the speed of propagation for this wave equation (a version of \emph{Lax's entropy inequality}), and the result is then proved via the implicit function theorem and finite speed of propagation arguments. Such a simplification is not present for the full equations of motion, and the local-in-time existence and uniqueness of solutions outside of longitudinal motion is an open question. Finally, in Section 4 we prove the orbital asymptotic stability of an explicit family of piece-wise constant stretched motions, 
\begin{align}
	\chi_s(s,t) = 
	\begin{cases}
		\nu_1 &\mbox{ if } s \in [0,\sigma(t)) \\
		\frac{\zeta t + \tau}{E} &\mbox{ if } s \in (\sigma(t),\infty),
	\end{cases}
\end{align}
parameterized by the initial shock front $\sigma(0)$ and shock speed $\sigma'(0)$ (see Theorem \ref{t:asymptotic_stability} and Theorem \ref{t:asymptotic_stability_poszeta}). When $\zeta > 0$, the final time of existence for this family and its perturbations is $T(\zeta) = (N_1 -\tau)/\zeta < \infty$, the time when the tension at infinity exceeds the threshold for inextensibility, $N(\infty,T(\zeta)) = N_1$. Since the tension at the end $s = \infty$ is growing to the threshold value $N_1$ when $\zeta >  0$, stability in this case is more delicate than $\zeta = 0$. One must show the stretch in the extensible segment grows in a controlled way like $(\zeta t + \tau)/E$ to leading order so that a second inextensible segment does not form. This requires a careful analysis of the relationship between the shock front $\sigma$ and the size of the perturbation in the extensible region $[\sigma(t),\infty)$.  For $\zeta > 0$ our results prove the existence of an open set of initial data launching motions having one inextensible segment and one extensible segment for $t < T(\zeta)$ and becoming fully inextensible $(\sigma(t) \rar \infty)$ as $t \rar T(\zeta)$. Moreover, these motions result in the blow-up of the tension: for all $s \in [0,\infty)$, 
\begin{align}
\lim_{t \rar T(\zeta)^-} N(s,t) = \infty. 
\end{align}
Whether or not the family of piece-wise constant stretched motions are orbitally asymptotically stable outside of purely longitudinal motion is an open question. 

We remark that our main results still hold with minor modifications if instead of assuming a linear relation between stretch and tension \eqref{eq:chiN_relation} we assume a linear relation between \emph{strain} $e := \nu - 1$ and tension, i.e. Hooke's law. In this case, \eqref{eq:chiN_relation} is replaced with 
\begin{align}
	\chi_s(s,t) - 1 =
	\begin{cases} 
		\frac{1}{E}N(s,t) &\mbox{ if } 0 < N(s,t) \leq N_1, \\
		\nu_1 -1&\mbox{ if } N(s,t) \geq N_1,
	\end{cases}\label{eq:chiN_relation2}
\end{align}
where $E := N_1 / (\nu_1 - 1)$ is \emph{Young's modulus} for the string. 
We expect our main results can be generalized for longitudinal motion involving more general stretch-limiting constitutive relations than \eqref{eq:chiN_relation} or \eqref{eq:chiN_relation2} with the possible exception of Theorem \ref{t:asymptotic_stability} due to a potential second shock forming in the extensible segment for small perturbations.  

\section{Formulation of the problem}

In this section we review of the general theory of mechanical strings, state the precise assumptions for the problem we consider and derive the reduced equations of motion.  

\subsection{Mechanical strings}

A brief overview of mechanical strings is as follows (see \cite{Antman79, AntmanBook} for a more extensive exposition). 
Let $\{ \bs i, \bs j, \bs k \}$ be a fixed right-handed orthonormal basis for Euclidean space $\bbE^3$. Let $I \subset \bbR$ be a fixed reference interval which parameterizes the \emph{particles} or \emph{material points} of the string. The \emph{configuration} of the string at time $t$ is the curve $I \ni s \mapsto \bs r(s,t) \in \bbE^3$, and the tangent to the curve $\bs r(\cdot, t)$ at $s$ is $\bs r_s(s,t) = \p_s \bs r(s,t)$. The \emph{stretch} $\nu(s,t)$ of the string at $(s,t)$ is 
\begin{align*}
	\nu(s,t) := |\bs r_s(s,t)|.
\end{align*}
We require that the configuration is regular so that stretch is always positive throughout the motion of the string. As a result of balance of linear momentum, the general \emph{classical equations of motion} for a string are given by: 
\begin{align}
	(\rho A)(s) \bs r_{tt}(s,t) = \bs n_s(s,t) + \bs f(s,t), \quad (s,t) \in I \times [0,T]. \label{eq:motion}
\end{align}
Here $(\rho A)(s)$ is the mass per unit reference length at $s$, $\bs f(s,t)$ is the body force per unit reference length at $(s,t)$, and $\bs n(s,t)$ is the contact force at $(s,t)$. 
A defining characteristic of a string is that the contact force is tangential to the configuration of the string: there exists a scalar-valued function $N(s,t)$, the \emph{tension}, such that 
\begin{align*}
	\bs n(s,t) = N(s,t) \frac{\bs r_s(s,t)}{|\bs r_s(s,t)|}.
\end{align*}

The mechanical properties of a string are modeled by specifying a relation between the stretch $\nu$ and tension $N$. Classically, a string is said to be \emph{elastic} if there exists a function $\hat N(\nu,s)$ such that 
\begin{align}
	N(s,t) = \hat N(\nu(s,t), s), \label{eq:constN}
\end{align}
with $\hat N(1,s) = 0$, $\nu \mapsto \hat N(\nu,s)$ increasing, $\lim_{\nu \rar 0} \hat N(\nu,s) = -\infty$ and $\lim_{\nu \rightarrow \infty} \hat N(\nu,s) = \infty$ for all $s$. These mathematical restrictions on $\hat N$ reflect the physically reasonable assumptions that an un-stretched configuration is not in a state of tension, an increase in tension leads to an increase in stretch, a state of zero stretch requires infinite compression force and a state of infinite stretch requires infinite tensile force.  Thus, $\hat N(\cdot,s)$ has an inverse function $\hat \nu(\cdot, s)$, and the constitutive relation between stretch and tension takes the form 
\begin{align}
	\nu(s,t) = \hat \nu(N(s,t),s). \label{eq:const}
\end{align}

\subsection{Longitudinal motion for stretch-limited elastic strings}

In this paper we study the dynamics of a simple class of uniform, semi-infinite stretch-limited elastic strings introduced in \cite{Rod20}. The constitutive relation can be expressed via \eqref{eq:const} but not via \eqref{eq:constN}, and we assume that the string becomes inextensible once a threshold value of tensile force is reached.  Mathematically, the string satisfies:  
\begin{itemize}
\item for all $(s,t) \in [0,\infty) \times [0,T]$, 
\begin{align}
(\rho A)(s) = \gamma > 0, \quad \nu(s,t) = \hat \nu(N(s,t)), 
\end{align}
where $\hat \nu : \bbR \rar (0,\infty)$ is absolutely continuous, nondecreasing and 
$\hat \nu(0) = 1$,  
\item there exist $N_1 > 0$ and $\nu_1 > 1$ such that $\hat \nu \in C^\infty([0, N_1];[1, \nu_1])$ is strictly increasing and for all $N \geq N_1$,
\begin{align}
\hat \nu(N) = \nu_1. 
\end{align} 
For simplicity we assume the constitutive relation is linear for all values of tension near the threshold: there exists $\eta > 0$ such that for all $N \in [\eta,N_1]$
\begin{align}
\hat \nu(N) = \frac{1}{E} N, \label{eq:hooke}
\end{align}
where $E := N_1/ \nu_1$. As mentioned in the introduction, our main results hold with minor modifications if instead of \eqref{eq:hooke} we assume a linear relation between the \emph{strain} $e := \nu - 1$ and the tension (Hooke's law):  there exists $\eta \geq 0$ such that for all $N \in [\eta,N_1]$
\begin{align}
	e = \hat e(N) = \frac{1}{E} N, \label{eq:hooke2}
\end{align} 
where $E := N_1 /(\nu-1)$ is \emph{Young's modulus} for the string. For definiteness, our statements and proofs will be in the setting of the first relation \eqref{eq:hooke}. 
\end{itemize}

Strings satisfying the previous assumptions admit a stored energy functional for motions satisfying $N > \eta$ and may be described as \emph{elastic}: there is no dissipation for motions that are sufficiently smooth. Indeed, define the stored energy functional via 
\begin{align}
	W(N, \nu) = 1_{\{ N \leq N_1 \}} \frac{E}{2} \nu^2 + 
	1_{\{N \geq N_1 \}} \frac{E}{2} \nu_1^2, 
\end{align}
the total stored energy of the material segment $[a,b]$, 
\begin{align}
	E(t) := \int_a^b W(N(s,t), \nu(s,t)) ds. 
\end{align}
and the kinetic energy of the material segment $[a,b]$, 
\begin{align}
K(t) := \int_a^b \frac{\gamma}{2} |\bs r_t(s,t)|^2 ds.
\end{align}
Then as long as the state variables $(\bs r_t, \bs r_s, \bs n) \in C^1([a,b] \times [0,T]; \bbR^9)$ and $N > \eta$ during the motion, an easy calculation using \eqref{eq:motion} and \eqref{eq:hooke} shows that for all $t \in [0,T]$
\begin{align}
	K'(t) + E'(t) = P(t) := \bs n(b,t) \cdot \bs r_t(b,t) - \bs n(a,t) \cdot \bs r_t(a,t) \label{eq:energy} 
\end{align} 
where $P(t)$ is the power exerted on the segment $[a,b]$ by the tensile forces in the string. By the first law of thermodynamics \eqref{eq:energy} implies that no mechanical work is converted to heat. 

In this work we specialize our study to the longitudinal motion for a uniform, tensile, stretch-limited string under no external body force, fixed at the end $s = 0$, pulled taut at the end at $s = \infty$ and containing an inextensible segment and an extensible segment: 
\begin{itemize}
	\item for all $(s,t)$,
	\begin{align}
	\bs r(s,t) = \chi(s,t) \bs i, \quad \chi_s(s,t) > 0, 
	\end{align}
	so 
	\begin{align}
	\nu(s,t) = \chi_s(s,t), \quad \bs n(s,t) = N(s,t) \bs i, \\
	\chi_s(s,t) = \begin{cases}
	\frac{1}{E} N(s,t) &\mbox{ if } N(s,t) \in [\eta, N_1], \\
	\nu_1 &\mbox{ if } N(s,t) \in [N_1, \infty),
	\end{cases}
	\end{align}
	
	\item there is no body force, $\bs f(s,t) = \bs 0$, 
	
	\item the boundary conditions 
	\begin{align}
	\bs r(0,t) = \bs 0, \quad \bs n(\infty,t) = (\zeta t + \tau) \bs i, \label{eq:bdy_cdns}
	\end{align}
	where $\zeta \geq 0$ and $\tau \in (\eta, N_1)$,
	
	\item there exists $\sigma \in C^2([0,T];(0,\infty))$ such that $\sigma' \geq 0$ and
	\begin{gather}
	\forall s \in [0,\sigma(t)), \quad N(s,t) \in (N_1,\infty),  \\ 
	\forall s \in (\sigma(t),\infty), \quad N(s,t) \in (\eta,N_1), \label{eq:N_conditions}
	\end{gather}

\item and the generic condition that for all $t \in [0,T]$, 
\begin{align}
	N(\sigma^-(t),t) > N_1 > N(\sigma^+(t),t). \label{eq:jump}
\end{align}

\end{itemize}
	We refer to $[0,\sigma(t)]$ as the inextensible (material) segment, 
$[\sigma(t),\infty)$ as the extensible (material) segment and the point $s = \sigma(t)$ as the shock front. We note that the assumption $\sigma' \geq 0$ reflects the physical assumption that the inextensible segment is \emph{growing} in time.

We consider weak solutions to the equations of motion \eqref{eq:motion} on $[0,\infty) \times [0,T]$ which are also satisfied classically on 
\begin{align}
\cl I_\sigma := \{(s,t) : t \in [0,T], 
s \in [0, \sigma(t)] \}
\end{align}
and
\begin{align}
\cl E_\sigma := \{(s,t) : t \in [0,T] , s \in [\sigma(t), \infty) \}
\end{align} 
separately. The property of $(\bs r, \bs n)$ being a weak solution is equivalent to the Rankine-Hugoniot jump conditions: 
\begin{align}
\llbracket \bs n \rrbracket + \sigma' \llbracket \bs r_t \rrbracket = \bs 0, \label{eq:RH_cond}
\end{align} 
where $\llbracket \bs y \rrbracket(\sigma(t),t) = \bs y(\sigma(t)^+,t) - \bs y(\sigma(t)^-,t)$ is the jump across the point $s = \sigma(t)$ at time $t$.

\subsection{Reduced equations of motion and jump conditions}

We now derive the reduced equations of motion from \eqref{eq:motion}, the Rankine-Hugoniot conditions \eqref{eq:RH_cond} and our assumptions from the previous subsection. 
For $(s,t) \in \cl E_\sigma$, where the string is extensible, \eqref{eq:motion} and the boundary condition \eqref{eq:bdy_cdns} reduce via \eqref{eq:hooke} to the wave equation 
\begin{align}
\begin{split}
&\chi_{tt}(s,t) = \frac{E}{\gamma} \chi_{ss}(s,t), \\
&\chi_s(\infty,t) = \frac{\zeta t + \tau}{E}, 
\end{split}\label{eq:ext_equations}
\end{align} 
For $(s,t) \in \cl I_\sigma$,
where the string is inextensible and $\chi_s(s,t) = \nu_1$, we have 
\begin{align}
\chi(s,t) = \int_0^s \nu_1 d\xi = \nu_1 s, 
\end{align}
and \eqref{eq:motion} reduces to $N_s = 0$ so,
\begin{align}
\begin{split}
N(s,t) &= N(\sigma^-(t),t) \\
\chi(s,t) &= \nu_1 s. 
\end{split}\label{eq:inext_equations1}
\end{align}
The Rankine-Hugoniot conditions \eqref{eq:RH_cond} reduce to 
\begin{align}
\llb N \rrb + \gamma \sigma'(t) \llb \chi_t \rrb = 0. \label{eq:RH_conditions} 
\end{align} 
Imposing the condition that $\chi(s,t)$ is continuous across $s = \sigma$ (for brevity we drop the dependence of $\sigma$ on $t$) implies  
\begin{align}
\chi(\sigma^+,t) = \nu_1 \sigma = \frac{N_1}{E} \sigma \label{eq:conty}
\end{align} 
and thus
\begin{align}
\chi_t(\sigma^+,t) &= \frac{\sigma'}{E}(N_1 - N(\sigma^+,t)). \label{eq:zt_jump}
\end{align}
Since $\chi_t(s,t) = 0$ for $s \in [0,\sigma]$, we conclude 
\begin{align}
\gamma \sigma' \llb \chi_t \rrb = \frac{\gamma (\sigma')^2}{E}(N_1 - N(\sigma^+,t)) \label{eq:kinetic_jump}
\end{align}
which by \eqref{eq:RH_conditions} yields 
\begin{align}
N(\sigma^-,t) = N(\sigma^+,t) + \frac{\gamma (\sigma')^2}{E}(N_1 - N(\sigma^+,t)). \label{eq:tension_jump}
\end{align}
Thus, in the extensible region $\cl I_{\sigma}$, 
\begin{align}
\begin{split}
N(s,t) &= N(\sigma^+,t) + \frac{\gamma (\sigma')^2}{E}(N_1 - N(\sigma^+,t)), \\
\chi(s,t) &= \nu_1 s. 
\end{split}\label{eq:inext_equations}
\end{align}
In summary, the equations of motion \eqref{eq:motion} and Rankine-Hugoniot conditions \eqref{eq:RH_cond} are equivalent to the equations \eqref{eq:ext_equations} in the region $\cl E_{\sigma}$, the equations \eqref{eq:inext_equations} in the region $\cl I_{\sigma}$, the condition \eqref{eq:conty} and our assumption \eqref{eq:jump}.

From \eqref{eq:jump} and \eqref{eq:inext_equations}, it follows that the shock speed is greater than the speed of propagation in the extensible region to the right of $\sigma(t)$:
\begin{align}
	\infty > \sigma'(t) > \sqrt{E/\gamma}. \label{eq:shock_speed_bounds}
\end{align} 
Since the speed of propagation of the inextensible region to the left of $\sigma(t)$ is infinite, \eqref{eq:shock_speed_bounds} reflects that the solutions we study satisfy a generalized version of \emph{Lax's entropy inequality} \cite{Lax_CPAM57}. By choosing appropriate units, we may assume that 
\begin{align}
\gamma = E = 1, 
\end{align}
so that $N_1 = \nu_1$ and for all $(s,t) \in \cl E_{\sigma}$
\begin{align}
N(s,t) = \chi_s(s,t). 
\end{align}

\section{Existence and Uniqueness of Shock Solutions}

We now define precisely the class of initial data and shock solutions to the equations of motion that we study in this work and prove our existence and uniqueness result.  

\subsection{Preliminaries} 

\begin{defn}\label{d:diss_data}
	We say a triple of real-valued functions 
	\begin{align}
	(\chi_0, \chi_1, N_0) 
	\in C([0,\infty)) \times L^\infty([0,\infty)) \times L^\infty([0,\infty)). 
	\end{align}
	 is a set of classical \emph{shock front initial data} if there exists a unique $\sigma_0 \in (0,\infty)$ (the initial shock front) such that 
	\begin{enumerate}
	\item $(\chi_0, \chi_1) \in C^2 \times C^1([\sigma_0,\infty))$ and $N_0 \in C^1([\sigma_0,\infty))$, 
	\item for all $s \in [\sigma_0, \infty)$, $N_0(s) = \chi_0'(s)$ and
	\begin{gather}
	\eta < N_0(s) < N_1, 
	\end{gather}
	\item $\chi_0'(\infty) = \tau, \quad \chi_1'(\infty) = \zeta, \quad \chi_0(\sigma_0) = N_1\sigma_0$, 
	\item $\sigma_1 := \frac{\chi_1(\sigma_0^+)}{N_1 - \chi_0'(\sigma_0^+)} > 1,$ 
	\item for all $s \in [0,\sigma_0]$, 
	\begin{align}
	\begin{split}
	\chi_0(s) &= N_1 s, \\
	\chi_1(s) &= 0, \\
	N_0(s) &= N_0(\sigma_0^+) + (\sigma_1)^2(N_1 - N_0(\sigma_0^+)) > N_1.
	\end{split}\label{eq:inextdata_definition}
	\end{align}
	\end{enumerate}
\end{defn}

We remark that a simple class of shock front initial data is given by 
\begin{align}
\chi_0(s) = N_1 \sigma_0 + \tau(s - \sigma_0), \quad s \in [\sigma_0,\infty). \label{eq:z0_g0_data} 
\end{align}
and \emph{any} $\chi_1 \in C^1([\sigma_0,\infty))$ satisfying 
\begin{align}
\chi_1'(\infty) = \zeta, \quad \chi_1(\sigma_0) > (N_1 - \tau). 
\end{align}
(This data $(\chi_0,\chi_1,N_0)|_{[\sigma_0, \infty)}$ is extended to $[0,\sigma_0]$ via \eqref{eq:inextdata_definition}.) 

\begin{defn}\label{d:diss_solution}
Let $(\chi_0,\chi_1,N_0)$ be a set of classical shock front initial data with initial shock front $\sigma_0 \in (0,\infty)$. We say a pair 
\begin{align}
(\chi,N) \in C([0,\infty) \times [0,T]) \times L^\infty([0,T] \times [0,\infty))
\end{align}
is a classical \emph{shock solution} on the time interval $[0,T]$ with initial data $(\chi_0,\chi_1,N_0)$ if
$\p_t \chi \in L^\infty([0,\infty) \times [0,T])$ and 
there exists a unique curve $$\sigma \in C^2([0,T];[\sigma_0,\infty)),$$ the shock front, with $\sigma(0) = \sigma_0$ such that 
\begin{enumerate}
	\item  $\chi \in C^2(\cl E_{\sigma})$ and $N \in C^1(\cl E_\sigma)$ where, as before, 
	\begin{align}
	\cl E_{\sigma} := \{(s,t) : t \in [0,T], s \in [\sigma(t),\infty) \},
	\end{align}
	\item on 
	$\cl E_{\sigma}$, $N(s,t) = \chi_s(s,t)$ and 
	\begin{align}
	\eta < N(s,t) < N_1, 
	\end{align} 
	\item $\chi(\sigma^+,t) = N_1 \sigma$, 
	\item $\sigma' = \frac{\chi_t(\sigma^+,t)}{N_1 - N(\sigma^+,t)} > 1$, 
	\item on $\cl I_{\sigma} := \{ (s,t) : t \in [0,T], s \in [0,\sigma]\}$,
	\begin{align}
	\begin{split}
	\chi(s,t) &= N_1 s, \\
	N(s,t) &= N(\sigma^+,t) + (\sigma')^2(N_1 - N(\sigma^+,t)) > N_1. 
	\end{split}\label{eq:inext_definition}
	\end{align} 
	\item on $\cl E_{\sigma}$, $\chi$ satisfies the wave equation
	\begin{align}
	\begin{split}
	&\chi_{tt}(s,t) = \chi_{ss}(s,t), \\
	&\chi_s(\infty,t) = \zeta t + \tau, \\
	&\chi(s,0) = \chi_0(s), \chi_t(s,0) = \chi_1(s), 
	\end{split} \label{eq:ext_equations1} 
	\end{align}
\end{enumerate} 
\end{defn}
		
\subsection{Existence and uniqueness of classical shock solutions}

We now prove the existence and uniqueness of classical shock solutions for given classical shock front initial data. 

\begin{thm}\label{p:wellposedness}
Suppose $(\chi_0,\chi_1,N_0)$ is a set of classical shock front initial data with initial shock front $\sigma_0 \in (0,\infty)$. 
Then there exist $T = \hat T(\chi_0,\chi_1) > 0$ and a unique classical shock solution $(\chi,N)$ to the equations of motion on the time interval $[0,T]$. 
\end{thm}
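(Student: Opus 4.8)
The plan is to exploit that the shock front is \emph{supersonic}: the entropy inequality $\sigma' > 1$ (cf.\ \eqref{eq:shock_speed_bounds}) forces the extensible region $\cl E_{\sigma}$ to lie entirely inside the region where the wave equation \eqref{eq:ext_equations1} is solved by the Cauchy data alone, so that $\chi$ is completely pinned down and only the free boundary $\sigma$ remains unknown. Concretely, since $\chi_0 \in C^2$ and $\chi_1 \in C^1$ on $[\sigma_0,\infty)$, I would set $u(s,t) := \tfrac12\big(\chi_0(s-t)+\chi_0(s+t)\big) + \tfrac12\int_{s-t}^{s+t}\chi_1(\xi)\,d\xi$ for $s - t \ge \sigma_0$, $t\ge 0$. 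This is the unique $C^2$ solution of $u_{tt}=u_{ss}$ with $u(\cdot,0)=\chi_0$, $u_t(\cdot,0)=\chi_1$ on $[\sigma_0,\infty)$, and, using $\chi_0'(\infty)=\tau$ and $\chi_1'(\infty)=\zeta$ from Definition~\ref{d:diss_data}(3), one checks $u_s(\infty,t)=\zeta t + \tau$. Because $\chi_0'$ and $\chi_1'-\zeta$ extend continuously to $s=\infty$ (hence are uniformly continuous and bounded on $[\sigma_0,\infty)$), one gets $u_s(s,t) = \chi_0'(s) + \zeta t + o(1)$ as $t \to 0$, \emph{uniformly} in $s \ge \sigma_0$; since $\chi_0'$ ranges in a compact subset of $(\eta, N_1)$ there is $T_1 > 0$ with $\eta < u_s < N_1$ on $\{s\ge\sigma_0\}\times[0,T_1]$.

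Next I would locate the shock front by the implicit function theorem. Put $G(s,t):=u(s,t)-N_1 s$; by Definition~\ref{d:diss_data}(2)--(3) we have $G(\sigma_0,0)=\chi_0(\sigma_0)-N_1\sigma_0=0$ and $\partial_s G(\sigma_0,0)=N_0(\sigma_0^+)-N_1<0$, so (as $G\in C^2$) there are $T_2\in(0,T_1]$ and a unique $\sigma\in C^2([0,T_2])$ with $\sigma(0)=\sigma_0$ and $u(\sigma(t),t)=N_1\sigma(t)$ — exactly the continuity condition~(3) of Definition~\ref{d:diss_solution}. Differentiating this identity yields $u_s(\sigma,t)\sigma' + u_t(\sigma,t) = N_1\sigma'$, i.e.\ the shock-speed relation $\sigma' = u_t(\sigma,t)/(N_1-u_s(\sigma,t))$ of Definition~\ref{d:diss_solution}(4), whose value at $t=0$ is $\chi_1(\sigma_0^+)/(N_1-\chi_0'(\sigma_0^+))=\sigma_1>1$ by Definition~\ref{d:diss_data}(4). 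Shrinking to some $T\in(0,T_2]$ we may assume $\sigma'>1$ on $[0,T]$, hence $\sigma(t)\ge\sigma_0+t$ and $\cl E_{\sigma}\subset\{s-t\ge\sigma_0\}$, so that $u$ — and the backward characteristic triangle of each of its points — is genuinely defined on $\cl E_{\sigma}$.

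I would then set $\chi:=u$, $N:=u_s$ on $\cl E_{\sigma}$, and $\chi:=N_1 s$, $N:= N(\sigma^+,t)+(\sigma')^2(N_1-N(\sigma^+,t))$ on $\cl I_{\sigma}$ as in \eqref{eq:inext_definition}, and check the clauses of Definition~\ref{d:diss_solution} one by one: continuity of $\chi$ across $\sigma$ is built into the construction; $\chi\in C^2(\cl E_{\sigma})$, $N\in C^1(\cl E_{\sigma})$, $\eta<N<N_1$ on $\cl E_{\sigma}$, and \eqref{eq:ext_equations1} with its data all hold by the first paragraph; and since $\chi_t(\sigma^-,t)=0$, a one-line computation recovers the Rankine--Hugoniot conditions \eqref{eq:RH_cond}, so $(\chi,N)$ is a weak solution of \eqref{eq:motion} classical on $\cl I_{\sigma}$ and $\cl E_{\sigma}$. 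For uniqueness, let $(\tilde\chi,\tilde N)$ be any classical shock solution on $[0,T]$ with front $\tilde\sigma$; since $\tilde\sigma'>1$ and $\tilde\sigma(0)=\sigma_0$, the same argument gives $\cl E_{\tilde\sigma}\subset\{s-t\ge\sigma_0\}$ with the backward light cone of each of its points landing on $\{t=0\}$ inside $[\sigma_0,\infty)$, so uniqueness for the Cauchy problem (finite speed of propagation) forces $\tilde\chi\equiv u$ on $\cl E_{\tilde\sigma}$; then $\tilde\sigma$ solves the scalar equation $u(\tilde\sigma(t),t)=N_1\tilde\sigma(t)$ with $\tilde\sigma(0)=\sigma_0$, and since $\partial_s(u-N_1 s)<0$ along it, the implicit function theorem plus a connectedness argument give $\tilde\sigma=\sigma$ on $[0,T]$, whence $\tilde\chi=\chi$ and, from \eqref{eq:inext_definition} and $\tilde N=\tilde\chi_s$ on $\cl E_{\tilde\sigma}$, $\tilde N=N$.

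The only genuinely non-formal point is the uniform-in-$s$ estimate of the first paragraph: one needs $\eta<N<N_1$ to persist on the \emph{unbounded} extensible region for a time interval independent of $s$, which is precisely why the prescribed limits of $\chi_0',\chi_1'$ at $s=\infty$ (uniform continuity of the data) are essential and why the entropy inequality $\sigma'>1$ must be propagated before the construction can be closed; the remaining verifications are bookkeeping against Definitions~\ref{d:diss_data}--\ref{d:diss_solution}.
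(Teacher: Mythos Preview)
Your proof is correct and follows essentially the same strategy as the paper: solve the wave equation via D'Alembert on the region determined by the Cauchy data alone, locate the shock front by the implicit function theorem applied to $u(\sigma,t)=N_1\sigma$, propagate $\sigma'>1$ for short time, and use finite speed of propagation for uniqueness. The only cosmetic differences are that the paper first extends $(\chi_0,\chi_1)$ from $[\sigma_0,\infty)$ to $C^2\times C^1(\bbR)$ so that the implicit function theorem is applied at an interior point rather than at the corner $(\sigma_0,0)$ of your domain $\{s-t\ge\sigma_0,\,t\ge0\}$, and that for uniqueness it phrases the identification of the front as ODE uniqueness for $\sigma'=\bar\chi_t(\sigma,t)/(N_1-\bar\chi_s(\sigma,t))$ rather than your implicit-function/connectedness argument.
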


\begin{proof}
(Existence)
We first extend the initial data $$(\chi_0, \chi_1) \in C^2 \times C^1([\sigma_0,\infty))$$ to a pair $$(\hat \chi_0, \hat \chi_1) \in C^2 \times C^1(\bbR)$$ such that for all $s \in [\sigma_0 - \delta,\infty)$ 
\begin{align}
N_0 < \hat \chi_0'(s) < N_1.
\end{align}
Let $\hat \chi \in C^2(\bbR^2_{s,t})$ be the unique solution to the wave equation 
\begin{align}
&\hat \chi_{tt}(s,t) = \hat \chi_{ss}(s,t), \quad (s,t) \in \bbR^2_{s,t}, \\
&\hat \chi(s,0) = \hat \chi_0(s), \hat \chi_t(s,0) = \hat \chi_1(s), \quad s \in \bbR. 
\end{align}
By D'Alembert's formula
\begin{align}
\hat \chi(s,t) = \frac{1}{2} \Bigl (
\hat \chi_0(s+t) + \hat \chi_0(s-t)
\Bigr ) + \frac{1}{2} \int_{s-t}^{s+t} \hat \chi_1(\xi) d\xi \label{eq:dalembert}
\end{align}
and the assumptions on the data, we have for all $t \in \bbR$,
\begin{align}
	\hat \chi_s(\infty,t) = \zeta t + \tau.
\end{align} 
By continuity and choosing $T_0 > 0$ sufficiently small, we have for all $(s,t) \in [\sigma_0-\delta,\infty) \times [-T_0,T_0]$, 
\begin{align}
\eta < \hat \chi_s(s,t) < N_1. 
\end{align}

Consider the function
\begin{align}
f(\sigma, t) = N_1 \sigma - \hat \chi(\sigma,t), 
\quad  (\sigma,t) \in \bbR \times [-T_0,T_0].  
\end{align}
Then $f(\sigma_0,0) = N_1 \sigma_0 - \chi_0(\sigma_0) = 0$, and 
\begin{align}
f_{\sigma}(\sigma_0,0) = {N_1} - \chi_0'(\sigma_0^+) = 
N_1 - N_0(\sigma_0^+)> 0. 
\end{align}
By the implicit function theorem, there exists $T_1 \in (0,T_2)$, $\eps > 0$ such that for all $t \in [-T_1,T_1]$ there exists a unique $\hat \sigma(t) \in [\sigma_0 - \eps, \sigma_0 + \eps]$ such that $f(\hat \sigma(t),t) = 0$. Moreover, by differentiating the equation $f(\hat \sigma(t),t) = 0$ in time, we have 
\begin{align}
\hat \sigma'(t) = \frac{\hat \chi_t(\hat \sigma(t),t)}{N_1 - \hat \chi_s(\hat \sigma(t),t)} \label{eq:sigma_der}
\end{align}
which implies $\hat \sigma'(t)$ is $C^1$ and 
\begin{align}
\hat \sigma'(0) = \sigma_1 = \frac{\chi_1(\sigma_0^+)}{N_1 - N_0(\sigma_0^+)} > 1. 
\end{align}
By choosing $\hat T < T_1$ sufficiently small, we can ensure that for all $t \in [0,\hat T]$
\begin{align}
\hat \chi(\hat \sigma(t),t) = {N_1}\hat \sigma(t), \quad 
\hat \sigma'(t) > 1. 
\end{align}
The latter inequality along with $N_1 > \hat \chi_s(\hat \sigma(t),t)$ implies 
\begin{align}
\hat \chi_s(\hat \sigma(t),t) + {(\hat \sigma'(t))^2}(N_1 - \hat \chi_s(\hat \sigma(t),t))
> N_1.
\end{align}
Defining $T := \hat T$, $\sigma := \hat \sigma$, $(\chi,N)|_{\cl E_\sigma} := (\hat \chi,\hat \chi_s)|_{\cl E_{\sigma}}$ and $(\chi,N)|_{\cl I_\sigma}$ via \eqref{eq:inext_definition} we obtain a shock solution on $[0,T]$ with initial data $(\chi_0,\chi_1,N_0)$. 

(Uniqueness) Suppose $(\check \chi, \check N)$ is another  shock solution on $[0,T]$ with initial data $(\chi_0,\chi_1,N_0)$ and initial shock front $\sigma_0$. Let $\bar \chi$ be the unique $C^2$ solution to the wave equation \eqref{eq:ext_equations1} on $\{(s,t): t \in [0,T], s \in [\sigma_0 + t,\infty)\}$. By finite speed of propagation, $\bar \chi$ is a $C^2$ extension of $\chi |_{\cl E_\sigma}$ and $\check \chi |_{\cl E_{\check \sigma}}$, and thus, $\sigma$ and $\check \sigma$ solve the ordinary differential equation
\begin{align}
\bar \sigma' = \frac{\bar \chi_t(\bar \sigma,t)}{N_1 - \bar \chi_s(\bar \sigma,t)}, \quad \bar \sigma(0) = \sigma_0,
\end{align}
which implies $\sigma = \check \sigma$. We conclude $\chi |_{\cl E_\sigma} = \bar \chi |_{\cl E_{\sigma}} = \check \chi|_{\cl E_{\sigma}}$, proving uniqueness. 
\end{proof}

We note that our uniqueness proof does not rely on $T$ being the same as that from the existence proof. Therefore, we define the \emph{final time of existence} $T_+$ for a shock solution $(\chi,N)$ to be the largest time such that $(\chi,N)$ is a shock solution on $[0,T_+)$. We refer to $[0,T_+)$ as the \emph{maximal interval of existence} for $(\chi,N)$.

By finite speed of propagation we have the following simple observation that we can identify a classical shock solution $(\chi,N)$ on $\cl E_{\sigma}$ with the restriction of a solution to the wave equation defined on the strictly larger domain $\{(s,t) : t \in [0,T], s \in [\sigma(0)+t, \infty) \}$.  

\begin{prop}\label{p:cont_criterion}
Suppose that $(\chi,N)$ is a classical shock solution on a time interval $[0,T)$ with shock front $\sigma$ and initial data $(\chi_0, \chi_1, N_0)$. Let 
$\bar \chi$ be the unique $C^2$ solution to the wave equation \eqref{eq:ext_equations1} on 
$\{(s,t): t \in [0,\infty), s \in [\sigma(0) + t,\infty)\}$. Then 
\begin{align}
	\cl E_{\sigma} \subset \{(s,t): t \in [0,\infty), s \in [\sigma(0) + t,\infty)\},
\end{align}
and $\chi_{\cl E_\sigma} = \bar \chi_{\cl E_{\sigma}}$. Consequentially, if 
\begin{align}
\sigma(T) := \lim_{t \rar T} \sigma(t) < \infty, \label{eq:ext_assumption}
\end{align}
and, 
\begin{align}
&\eta < \bar \chi_s(s,T) < N_1, \quad s \in [\sigma(T),\infty), \\
& \frac{\bar \chi_t(\sigma(T),T)}{N_1 - \bar \chi_s (\sigma(T),T)} > 1, 
\end{align}
then $(\chi,N)$ extends uniquely to a  shock solution on $[0,\bar T]$ with 
$\bar T > T$.   
\end{prop}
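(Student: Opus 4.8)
The plan is to establish the continuation criterion in Proposition \ref{p:cont_criterion} by reducing it, once again, to solving an ordinary differential equation for the shock front and then invoking Theorem \ref{p:wellposedness}. First I would justify the inclusion $\cl E_\sigma \subset \{(s,t) : t \in [0,\infty), s \geq \sigma(0) + t\}$: since $\sigma' > 1$ by part (4) of Definition \ref{d:diss_solution} (equivalently, since the shock speed exceeds the characteristic speed $\sqrt{E/\gamma} = 1$ by \eqref{eq:shock_speed_bounds}), the curve $t \mapsto \sigma(t)$ stays to the right of the characteristic $t \mapsto \sigma(0) + t$, so every point of $\cl E_\sigma$ indeed lies in the larger wedge. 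Then finite speed of propagation for the wave equation \eqref{eq:ext_equations1}—whose data $\chi_0, \chi_1$ are prescribed on $[\sigma_0, \infty) \supset [\sigma(0), \infty)$ and which $\chi$ solves on $\cl E_\sigma$—forces $\bar\chi = \chi$ on $\cl E_\sigma$, because both solve the same linear Cauchy problem on the overlapping region and the backward light cone from any point of $\cl E_\sigma$ meets the initial slice inside $[\sigma(0), \infty)$.

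The main argument is the continuation step. Assuming \eqref{eq:ext_assumption} and the two strict inequalities at time $T$, I would set $f(s,t) := N_1 s - \bar\chi(s,t)$ on a neighborhood of $(\sigma(T), T)$. Since $\chi(\sigma(t), t) = N_1 \sigma(t)$ on $[0,T)$ (part (3)) and $\bar\chi = \chi$ on $\cl E_\sigma$, continuity gives $f(\sigma(T), T) = 0$; and $f_s(\sigma(T), T) = N_1 - \bar\chi_s(\sigma(T), T) > 0$ by the first hypothesis (using $\bar\chi_s(\sigma(T),T) < N_1$). By the implicit function theorem there is a unique $C^1$—in fact $C^2$, since $\bar\chi \in C^2$—curve $\bar\sigma$ defined on $[T - \epsilon, T + \epsilon]$ with $\bar\sigma(T) = \sigma(T)$ and $N_1 \bar\sigma(t) = \bar\chi(\bar\sigma(t), t)$, and differentiating yields $\bar\sigma'(t) = \bar\chi_t(\bar\sigma(t),t)/(N_1 - \bar\chi_s(\bar\sigma(t),t))$. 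On $[0, T)$, $\sigma$ solves exactly this ODE with the same right-hand side (evaluated via $\bar\chi = \chi$), so by uniqueness for the ODE, $\bar\sigma \equiv \sigma$ on $[0,T)$ and hence the glued curve extends $\sigma$ past $T$. The second hypothesis, $\bar\chi_t(\sigma(T),T)/(N_1 - \bar\chi_s(\sigma(T),T)) > 1$, says $\bar\sigma'(T) > 1$; by continuity this persists on $[T, \bar T]$ for $\bar T > T$ close enough, and shrinking $\bar T$ further we also keep $\eta < \bar\chi_s < N_1$ on the relevant region of $\cl E_{\bar\sigma}$ (using the first hypothesis plus continuity) and keep $\bar\chi(\bar\sigma(t),t) = N_1 \bar\sigma(t)$. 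As in the proof of Theorem \ref{p:wellposedness}, $\bar\sigma' > 1$ together with $\bar\chi_s < N_1$ gives $\bar\chi_s(\bar\sigma(t),t) + (\bar\sigma'(t))^2(N_1 - \bar\chi_s(\bar\sigma(t),t)) > N_1$, so defining $(\chi, N)$ on $\cl E_{\bar\sigma}$ by $(\bar\chi, \bar\chi_s)$ and on $\cl I_{\bar\sigma}$ by \eqref{eq:inext_definition} produces a classical shock solution on $[0, \bar T]$ extending the original one; uniqueness of this extension follows verbatim from the uniqueness argument in Theorem \ref{p:wellposedness}.

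I expect the only genuine subtlety—rather than outright obstacle—to be bookkeeping around the "one-sided" nature of the extension at $t = T$: the implicit function theorem naturally produces $\bar\sigma$ on a two-sided interval, but the shock solution is only defined on $[0,T)$, so I must check that the $t < T$ branch of $\bar\sigma$ agrees with the already-constructed $\sigma$. This is where the ODE-uniqueness observation is essential, and it is exactly the mechanism already used in the uniqueness half of Theorem \ref{p:wellposedness}; the point is that $\bar\chi$ is a single globally-defined $C^2$ function on the wedge $\{s \geq \sigma(0) + t\}$, so the ODE $\bar\sigma' = \bar\chi_t(\bar\sigma,\cdot)/(N_1 - \bar\chi_s(\bar\sigma,\cdot))$ with initial condition $\bar\sigma(0) = \sigma_0$ has one solution, and both $\sigma$ (on $[0,T)$) and the IFT curve solve it. Everything else is a routine application of continuity to preserve the open conditions \eqref{eq:N_conditions} and $\bar\sigma' > 1$ on a slightly longer time interval, exactly as in the existence proof above.
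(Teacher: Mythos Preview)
Your proposal is correct. Note that the paper does not actually supply a proof of Proposition~\ref{p:cont_criterion}; it is stated as a ``simple observation'' following from finite speed of propagation, with the details left implicit. Your argument fills in those details in exactly the way the paper evidently intends: the inclusion $\cl E_\sigma \subset \{s \geq \sigma(0)+t\}$ from $\sigma' > 1$, identification of $\chi$ with $\bar\chi$ via finite speed of propagation, and then the implicit function theorem plus ODE uniqueness to extend $\sigma$ past $T$---this is precisely the machinery of the existence and uniqueness halves of Theorem~\ref{p:wellposedness}, transplanted to the base point $(\sigma(T),T)$ rather than $(\sigma_0,0)$. The bookkeeping you flag (matching the IFT curve with the original $\sigma$ on $[T-\epsilon,T)$ via ODE uniqueness) is the right thing to check and is handled correctly.
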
  

\subsection{Dissipation}

We conclude this section by showing that these motions dissipate the mechanical energy of material segments. 

\begin{prop}\label{p:diss}
	Suppose $(\chi,N)$ is a  shock solution to the equations of motion on the time interval $[0,T]$ with shock front $\sigma$, and suppose that $a,b \in (0,\infty)$ with $a < \sigma(t) < b$.  Define the total energy (the sum of the total kinetic energy and total stored energy of the string) of the material segment $[a,b]$ via  
	\begin{align}
		K(t) + E(t) &:= \int_\sigma^b \frac{1}{2} (\chi_t(s,t))^2 ds + \int_a^\sigma \frac{1}{2} N_1^2 ds 
		+ \int_\sigma^b \frac{1}{2} (N(s,t))^2 ds. 
	\end{align}
	Then for all $t \in [0,T]$, $$K'(t) + E'(t) = P(t) + Q(t)$$ where $P(t)$ is the total power exerted by the tensile forces on the segment $[a,b]$, 
	\begin{align}
		P(t) := N(b,t) \chi_t(b,t) - N(a,t) \chi_t(a,t) = 
		N(b,t) \chi_t(b,t), 
	\end{align}
	and $Q(t)$ is the total heat power,
	\begin{align}
		Q(t) := -\frac{\sigma'}{2}(N_1 - N(\sigma^+,t))^2((\sigma')^2-1) < 0.
	\end{align} 
\end{prop}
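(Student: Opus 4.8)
The plan is to differentiate $K(t)+E(t)$ directly, using the Leibniz rule for the moving endpoint $s=\sigma(t)$, then to insert the wave equation in the extensible region and the jump relations at the shock front, and finally to simplify algebraically. Since $(\chi,N)$ is a classical shock solution on $[0,T]$, we have $\chi\in C^2(\cl E_\sigma)$, $N=\chi_s\in C^1(\cl E_\sigma)$ on the extensible side, $\chi_t\equiv 0$ and $N(\cdot,t)\equiv N(\sigma^+,t)$ on $[0,\sigma(t)]$, and $\sigma\in C^2([0,T])$; hence each of the three integrals defining $K+E$ is differentiable in $t$ and differentiation under the integral sign is legitimate, with the one-sided limits at $s=\sigma(t)$ being the relevant integrand values.

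Differentiating term by term (the endpoints $a,b$ are fixed, only $s=\sigma(t)$ moves) produces boundary contributions at $s=\sigma(t)$, namely $-\tfrac{\sigma'}{2}\chi_t(\sigma^+,t)^2$ from the kinetic term, $+\tfrac{\sigma'}{2}N_1^2$ from the inextensible stored-energy term $\int_a^\sigma \tfrac12 N_1^2\,ds$, and $-\tfrac{\sigma'}{2}N(\sigma^+,t)^2$ from the extensible stored-energy term, together with the bulk integrals $\int_\sigma^b \chi_t\chi_{tt}\,ds$ and $\int_\sigma^b N N_t\,ds$. In $\cl E_\sigma$ we have $N=\chi_s$, so $N_t=\chi_{ts}$, and $\chi_{tt}=\chi_{ss}$ by \eqref{eq:ext_equations1}; therefore
\[
\int_{\sigma(t)}^b \chi_t\chi_{tt}\,ds+\int_{\sigma(t)}^b N N_t\,ds=\int_{\sigma(t)}^b \partial_s(\chi_t\chi_s)\,ds=\chi_t(b,t)N(b,t)-\chi_t(\sigma^+,t)N(\sigma^+,t).
\]
The term $\chi_t(b,t)N(b,t)$ is exactly $P(t)$, since $\chi_t(a,t)=0$ because $\chi_t$ vanishes on $[0,\sigma(t)]\ni a$; so it remains to identify the collection of all terms supported at $s=\sigma(t)$ with $Q(t)$.

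That collection is
\[
-\frac{\sigma'}{2}\chi_t(\sigma^+,t)^2+\frac{\sigma'}{2}N_1^2-\frac{\sigma'}{2}N(\sigma^+,t)^2-\chi_t(\sigma^+,t)N(\sigma^+,t).
\]
Here one substitutes the jump relation $\chi_t(\sigma^+,t)=\sigma'(N_1-N(\sigma^+,t))$, which is item (4) of Definition \ref{d:diss_solution} (equivalently \eqref{eq:zt_jump} with $\gamma=E=1$). Writing $m:=N_1-N(\sigma^+,t)$, which is $>0$ by \eqref{eq:jump}, and using $N_1^2-N(\sigma^+,t)^2=m\,(2N_1-m)$ and $N(\sigma^+,t)=N_1-m$, the displayed expression collapses: the $\sigma' m N_1$ contributions cancel and one is left with $-\tfrac{\sigma'}{2}m^2\big((\sigma')^2-1\big)=-\tfrac{\sigma'}{2}(N_1-N(\sigma^+,t))^2\big((\sigma')^2-1\big)=Q(t)$. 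Since $\sigma'>1$ (item (4)) and $m>0$, we get $Q(t)<0$, which also records the dissipation.

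There is no deep obstacle here; the only care required is bookkeeping. One must track the signs of the moving-boundary terms correctly — in particular that the inextensible stored-energy integral contributes with the opposite sign to the two $\int_\sigma^b$ integrals — and carry out the cancellation in the last step, where the $\sigma' m N_1$ terms must drop out for the identity to close. (As a consistency check one may verify, via \eqref{eq:tension_jump}, that $Q$ can equivalently be written in terms of $N(\sigma^-,t)$, though this is not needed for the proof.)
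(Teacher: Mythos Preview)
Your proof is correct and follows essentially the same approach as the paper: differentiate with the Leibniz rule at the moving endpoint, integrate by parts in the extensible region to produce $P(t)$ and the boundary flux $-\chi_t(\sigma^+,t)N(\sigma^+,t)$, and then simplify the collection of shock-front terms using $\chi_t(\sigma^+,t)=\sigma'(N_1-N(\sigma^+,t))$. The only cosmetic difference is that the paper routes the final algebra through $N(\sigma^-,t)$ via the Rankine--Hugoniot relation before reducing, whereas you substitute $m=N_1-N(\sigma^+,t)$ and cancel directly; both are equivalent.
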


\begin{proof}
	Using the relation $N = \chi_s$ on $\cl E_\sigma$, the equations of motion \eqref{eq:ext_equations1} and integration by parts we deduce 
	\begin{align}
		K' + E' = P + Q
	\end{align}
	where 
	\begin{align}
		-Q(t) = 
		\chi_t(\sigma^+,t) N(\sigma^+,t) + 
		\frac{\sigma'}{2}
		(\chi_t(\sigma^+,t))^2 + \frac{\sigma'}{2} (N(\sigma^+,t)^2 - N_1^2).
	\end{align}
	By the Rankine-Hugoniot conditions and the relation $$\llb \chi_t \rrb(\sigma^+,t) = \chi_t(\sigma^+,t) = \sigma' (N_1 - N(\sigma^+,t))$$ we deduce 
	\begin{align}
		\chi_t(\sigma^+,t) N(\sigma^+,t) &+ \frac{1}{2} \sigma' (\chi_t(\sigma^+,t))^2 \\ &=
		\llb \chi_t \rrb(\sigma^+,t) \left (
		N(\sigma^+,t) + \frac{1}{2} \sigma' \llb \chi_t \rrb(\sigma^+,t)
		\right) \\
		&= \llb \chi_t \rrb(\sigma^+,t) \frac{N(\sigma^+,t) + N(\sigma^-,t)}{2} \\
		&= \frac{\sigma'}{2}(N_1 - N(\sigma^+,t))(N(\sigma^+,t) + N(\sigma^-,t)).
	\end{align}
	By Definition \ref{d:diss_solution} we conclude 
	\begin{align}
		-Q(t) &= 
		\frac{\sigma'}{2} \Bigl (
		(N_1 - N(\sigma^+,t))(N(\sigma^+,t) + N(\sigma^-,t)) \\
		&\qquad- (N_1^2 - N(\sigma^+,t)^2)
		\Bigr ) \\
		&= \frac{\sigma'}{2}(N_1 - N(\sigma+,t))(N(\sigma^-,t) - N_1) \\
		&= \frac{\sigma'}{2}(N_1 - N(\sigma^+,t))^2((\sigma')^2-1) > 0
	\end{align}
	as desired. 
\end{proof}

\section{Piece-wise constant stretched motions}
In this section we prove the orbital asymptotic stability of an explicit two-parameter family of piece-wise constant stretched motions. 
\subsection{The two-parameter family of piece-wise constant stretched motions}
We define 
\begin{align}
	T(\zeta) := 
	\begin{cases}
		\infty  &\mbox{ if } \zeta = 0, \\
		\frac{N_1 - \tau}{\zeta} &\mbox{ if } 
		\zeta > 0.
	\end{cases}
\end{align}
When $\zeta > 0$, this is precisely the time when the tension at $s = \infty$ surpasses the threshold tensile value, $N_1$, for inextensibility. In particular, the solutions to the equations of motion having one inextensible segment and one extensible segment satisfy $T_+ \leq T(\zeta)$.

We first exhibit the solutions which have piece-wise constant stretch, 
\begin{align}
\nu(s,t) = \chi_{s}(s,t) = \begin{cases}
N_1 &\mbox{ if } s\in [0,\sigma(t)], \\
\zeta t + \tau &\mbox{ if } s \in [\sigma(t),\infty),
\end{cases} \label{eq:const_str}
\end{align}
where $\sigma$ is the shock front separating the inextensible segment $[0, \sigma]$ from the extensible segment $[\sigma, \infty)$. Via simple calculations, we have the following.

\begin{prop}\label{p:solutions_g0}
Let $(\sigma_0, \sigma_1) \in (0,\infty) \times (1,\infty)$.
Define a shock front $\sigma$ and pair of functions for $(s,t) \in \cl E_\sigma$ by 
\begin{align}
\begin{split}
\sigma(t ; \zeta, \sigma_0,\sigma_1) &= \sigma_0 + \frac{\sigma_1(N_1 - \tau) t}{N_1 - (\zeta t +\tau)}, \\
\chi(s,t ; \zeta, \sigma_0,\sigma_1) &= N_1 \sigma_0 + (\zeta t + \tau)(s- \sigma_0) + \sigma_1 (N_1 - \tau)t, \\
N(s,t; \zeta, \sigma_0, \sigma_1) &= \zeta t + \tau,
\end{split}\label{eq:solutions_g0}
\end{align}
extended to $\cl I_{\sigma}$ via \eqref{eq:inext_definition}.  Then \eqref{eq:solutions_g0} defines the unique solution to the equations of motion on the maximal interval of existence $[0,T(\zeta))$ satisfying \eqref{eq:const_str}, $\sigma(0) = \sigma_0$ and $\sigma'(0) = \sigma_1$.    
\end{prop}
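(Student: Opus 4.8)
\emph{Plan of proof.} The strategy is ``derive, then verify''. Under the ansatz \eqref{eq:const_str} the whole solution on $\cl E_\sigma$ is pinned down by the shock front $\sigma$ alone, and the wave equation collapses to a second-order linear ODE for $\sigma$; solving that ODE with the prescribed data $\sigma(0)=\sigma_0$, $\sigma'(0)=\sigma_1$ produces the candidate \eqref{eq:solutions_g0}, which one then checks meets every requirement of Definition \ref{d:diss_solution} on each $[0,T]\subset[0,T(\zeta))$, with $T(\zeta)$ its final time of existence.

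First, for the derivation and uniqueness, let $(\chi,N)$ be any shock solution with front $\sigma$ obeying \eqref{eq:const_str}, $\sigma(0)=\sigma_0$ and $\sigma'(0)=\sigma_1$. On $\cl E_\sigma$ the relation $N=\chi_s$ forces $N(s,t)=\zeta t+\tau$; integrating $\chi_s=\zeta t+\tau$ in $s$ from $s=\sigma(t)$ and using the continuity condition $\chi(\sigma^+,t)=N_1\sigma(t)$ gives $\chi(s,t)=N_1\sigma(t)+(\zeta t+\tau)(s-\sigma(t))$. Hence $\chi_{ss}\equiv 0$, and differentiating in $t$ at fixed $s$ — keeping $\chi_t(\sigma(t)^+,t)$, the partial $t$-derivative evaluated at $s=\sigma(t)$, distinct from $\tfrac{d}{dt}[\chi(\sigma(t),t)]$ — gives $\chi_t(s,t)=(N_1-\zeta t-\tau)\sigma'(t)+\zeta(s-\sigma(t))$ and then $\chi_{tt}(s,t)=(N_1-\zeta t-\tau)\sigma''(t)-2\zeta\sigma'(t)$. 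So $\chi_{tt}=\chi_{ss}$ reduces to the linear ODE $(N_1-\zeta t-\tau)\sigma''-2\zeta\sigma'=0$ on $[0,T(\zeta))$; with $\sigma'(0)=\sigma_1$ its unique solution is $\sigma'(t)=\sigma_1(N_1-\tau)^2/(N_1-\zeta t-\tau)^2$, and integrating with $\sigma(0)=\sigma_0$ reproduces the formula for $\sigma$ in \eqref{eq:solutions_g0}. Back-substitution recovers $\chi$ and $N$, so the solution is unique within the class \eqref{eq:const_str}.

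Next, for the verification, take \eqref{eq:solutions_g0} on $\cl E_\sigma$, extended to $\cl I_\sigma$ by \eqref{eq:inext_definition}, and fix $T<T(\zeta)$. Since $N_1-\zeta t-\tau>0$ on $[0,T]$, we have $\sigma\in C^2([0,T];[\sigma_0,\infty))$, and $\chi$, $N$ are polynomials in $(s,t)$, so the regularity and continuity items of Definition \ref{d:diss_solution} hold; the wave equation and $\chi_s(\infty,t)=\zeta t+\tau$ hold because $\chi_{ss}=\chi_{tt}=0$ and $\chi_s\equiv\zeta t+\tau$; $\chi(\sigma^+,t)=N_1\sigma$ is the identity $(N_1-\zeta t-\tau)(\sigma-\sigma_0)=\sigma_1(N_1-\tau)t$; $\eta<N(s,t)=\zeta t+\tau<N_1$ holds for $t<T(\zeta)$ since $\tau\in(\eta,N_1)$ and $\zeta t+\tau<N_1$ precisely when $t<T(\zeta)$; and, using $\chi_t(\sigma^+,t)=(N_1-\zeta t-\tau)\sigma'$, the shock-speed relation $\sigma'=\chi_t(\sigma^+,t)/(N_1-N(\sigma^+,t))$ holds while $\sigma'=\sigma_1(N_1-\tau)^2/(N_1-\zeta t-\tau)^2\ge\sigma_1>1$, which in turn gives $N>N_1$ on $\cl I_\sigma$ through \eqref{eq:inext_definition}. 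Thus \eqref{eq:solutions_g0} is a shock solution on every $[0,T]\subset[0,T(\zeta))$, so its final time of existence satisfies $T_+\ge T(\zeta)$.

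Finally, for maximality: if $\zeta=0$ then $T(\zeta)=\infty$ and $T_+=\infty=T(\zeta)$; if $\zeta>0$, then $N_1-\zeta t-\tau\to 0^+$ and hence $\sigma(t)\to\infty$ as $t\to T(\zeta)^-$, so $\sigma$ cannot be a finite $C^2$ function of $t$ at $t=T(\zeta)$, forcing $T_+\le T(\zeta)$; with $T_+\ge T(\zeta)$ this gives $T_+=T(\zeta)$. (This is consistent with Proposition \ref{p:cont_criterion}: on any $[0,T]$ with $T<T(\zeta)$ the front stays finite and $\eta<\chi_s<N_1$, $\chi_t/(N_1-\chi_s)>1$ persist, so no breakdown occurs before $T(\zeta)$.) Every computation here is elementary; the only steps needing real care are keeping $\chi_t(\sigma(t)^+,t)$ distinct from the total derivative along the shock curve when extracting the ODE for $\sigma$ and when checking the Rankine--Hugoniot/shock-speed relation, and pinning down $T_+$ exactly via the blow-up $\sigma\to\infty$ --- which I expect to be the only genuinely non-mechanical part of the argument.
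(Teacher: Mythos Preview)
Your proof is correct and follows essentially the same approach as the paper, which simply says ``via simple calculations'' (the paper's own written-out proof, left commented out in the source, is a bare verification that the explicit formulas satisfy Definition~\ref{d:diss_solution} and that $\sigma'\ge\sigma_1>1$). Your added derivation step---reducing the wave equation under the ansatz \eqref{eq:const_str} to the ODE $(N_1-\zeta t-\tau)\sigma''-2\zeta\sigma'=0$ and solving it---is a clean way to handle the uniqueness claim directly rather than appealing to the general well-posedness of Theorem~\ref{p:wellposedness}, and your treatment of maximality via $\sigma\to\infty$ is the right argument.
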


The values of the following state variables corresponding to a piece-wise constant stretched motion with parameters $(\sigma_0,\sigma_1) \in (0,\infty) \times (1,\infty)$ (and omitting the dependence of the solution on $\zeta, \sigma_0, \sigma_1$) are given by: for $s \in [\sigma(t), \infty)$
\begin{align}
\chi_{s}(s,t) = \zeta t + \tau, \quad \chi_t(s,t) = 
\zeta(s - \sigma_0) + \sigma_1(N_1 - \tau), \\
\sigma'(t) = \sigma_1 \frac{(N_1 - \tau)^2}{(N_1 - (\zeta t + \tau))^2}
= \frac{\zeta(\sigma(t) - \sigma_0) + \sigma_1(N_1 - \tau)}{N_1 - (\zeta t + \tau)},
\end{align}
and for $s \in [0,\sigma(t)]$ 
\begin{align}
N(s,t) = \zeta t + \tau + \frac{[\zeta(\sigma(t) - \sigma_0) + \sigma_1(N_1 - \tau)]^2}{N_1 - (\zeta t + \tau)}. 
\end{align}

\subsection{Perturbations of piece-wise constant stretched motions} 

We first note that the piece-wise constant stretched motions described in the previous subsection are, in general, \emph{unstable} to small $L^\infty$ perturbations of the state variables.   
Indeed, suppose $\zeta > 0$ and $(\sigma_0, \sigma_1) \in (0,\infty) \times (1, \infty)$. Then for all $\epsilon > 0$ and $R > \sigma_0$ one can construct initial data $(\chi_0, \chi_1, N)$ with initial shock front $\sigma_0$ such that  
\begin{align}
	\tau \leq \chi_0'(s) &\leq \tau + \epsilon, \quad s \in [\sigma_0, \infty), \\
	\chi_0'(s) &= \begin{cases}
		\tau &\mbox{ if } s \in [\sigma_0, R] \cup [4R,\infty) \\
		\tau + \eps &\mbox{ if } s \in [2R, 3R]
	\end{cases}, \\
	\chi_1(s) &= \zeta(s - \sigma_0) + \sigma_1(N_1 - \tau), \quad s \in [\sigma_0, \infty).
\end{align}
Then
\begin{align}
	\| \chi_0' - \tau \|_{L^\infty[\sigma_0, \infty)} + \| \chi_1 - [\zeta(s - \sigma_0) + \sigma_1(N_1 - \tau)] \|_{L^\infty} \leq \epsilon. 
\end{align}
By choosing $\eps$ sufficiently small, $R$ sufficiently large and using finite speed of propagation, we can then conclude using Proposition \ref{p:cont_criterion} that 
\begin{align}
	T_+ = \frac{N_1 - (\tau + \eps)}{\zeta} < T(\zeta),
\end{align} 
and for $t \in [0,T_+]$, $s \in [2R+t, 3R-t]$, 
\begin{align}
\chi_s(s,t) = \zeta t + \tau + \eps,
\end{align}
i.e. the string has formed a second inextensible material segment before the time $T(\zeta)$. The previous argument also previews why proving stability of piece-wise constant stretch states is more delicate when $\zeta > 0$: the increasing tension at $s = \infty$ and the nature of the perturbation may result in a second inextensible segment. 

We now prove piece-wise constant stretch motions are orbitally asymptotically stable under small \emph{weighted} $L^\infty$ perturbations of the state variables and their derivatives. We begin with the simpler case $\zeta = 0$.

\begin{thm}\label{t:asymptotic_stability}
	Assume $\zeta = 0$. Let $(\sigma_0, \sigma_1) \in (0,\infty) \times (1,\infty)$.  There exist $\eps_0  > 0$ and $C > 0$ such that for all $\eps < \eps_0$, the following is true. Suppose  
	$(\chi_0,\chi_1,N_0)$ is a set of initial data with initial shock front $\sigma_0$ such that there exists $r > 0$ such that 
	\begin{align}
	B:= \sup_{s \in [\sigma_0,\infty)} \Bigl [	| &\chi_0'(s) - \tau | + s^{r+1}|\chi_0''(s)| \\
	 &+
		| \chi_1(s) - \sigma_1(N_1 - \tau) | + 
		s^{r+1} |\chi_1'(s)| \Bigr ]
		 < \eps. \label{eq:pertubation_g0}
	\end{align}
Then the unique solution $(\chi,N)$ to the equations of motion with initial data $(\chi_0, \chi_1, N_0)$ satisfies $T_+ = T(0) = \infty$ and 
		 the state variables and shock speed remain close to and asymptotically approach those of a piece-wise constant stretched motion as $t \rar \infty$: there exist $\sigma_1^\infty \in (1,\infty)$ such that $\bigl |\frac{\sigma_1^\infty}{\sigma_1} - 1\bigr | \leq C \eps$ and for all $t \in [0,\infty)$ 
		\begin{align}
			\begin{split}
				\sup_{s \in [\sigma(t), \infty)}& s^{r+1} [ |\chi_{ss}(s,t)| + |\chi_{st}(s,t)| ] \leq C B, \\
				\sup_{s \in [\sigma(t), \infty)}& \Bigl [ |\chi_s(s,t) - \tau| +
				+ |\chi_t(s,t) - \sigma_1^\infty (N_1 - \tau)|  \Bigr ] \leq C B (1 +t )^{-r}, \\
				\Bigl |&\frac{\sigma'(t)}{\sigma_1^\infty} - 1 \Bigr | \leq C B (1+t)^{-r}, \\
				\sup_{s \in [0,\sigma(t)]} & \Bigl |N(s,t)([\tau + (\sigma_1^\infty)^2(N_1 - \tau)])^{-1} - 
				1 \Bigr |
				\leq C B (1 + t)^{-r}.
			\end{split}\label{eq:asymptotic}
		\end{align}
\end{thm}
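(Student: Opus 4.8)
The plan is to represent the solution in the extensible region by d'Alembert's formula, read off from the weighted data bounds both the far-field limit (which fixes $\sigma_1^\infty$) and the decay rate, and then run the shock ODE to get global existence together with the quantitative estimates. By Proposition~\ref{p:cont_criterion}, on the maximal interval $[0,T_+)$ the restriction $\chi|_{\cl E_\sigma}$ equals the solution $\bar\chi$ of the wave equation \eqref{eq:ext_equations1} on $\Omega := \{(s,t): t\ge 0,\ s\ge \sigma_0+t\}$, and (since $\sigma'>1$ by Definition~\ref{d:diss_solution}) $\cl E_\sigma\subset\Omega$. On $\Omega$ the backward light cone through $(s,t)$ stays in $[\sigma_0,\infty)$, so the Riemann invariants $\chi_t\pm\chi_s$ are carried by the data and
\[
\bar\chi_s(s,t)=\tfrac12\bigl[\chi_0'(s+t)+\chi_0'(s-t)+\chi_1(s+t)-\chi_1(s-t)\bigr],
\]
with $\bar\chi_t$, $\bar\chi_{ss}$, $\bar\chi_{st}$ given by the analogous formulas (differentiating once more for the second derivatives).

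Write $p:=\chi_0'-\tau$ and $q:=\chi_1-\sigma_1(N_1-\tau)$, so that \eqref{eq:pertubation_g0} gives $|p|,|q|\le B$ and $|p'(\xi)|,|q'(\xi)|\le B\,\xi^{-(r+1)}$. Since $\chi_0'(\infty)=\tau$ (Definition~\ref{d:diss_data}) and $r>0$, integrating $p'$ from $\xi$ to $\infty$ gives $p(\infty)=0$ and $|p(\xi)|\le\tfrac{B}{r}\xi^{-r}$; since $q'$ is integrable, $q(\infty):=\lim q$ exists with $|q(\infty)|\le B$ and $|q(\xi)-q(\infty)|\le\tfrac{B}{r}\xi^{-r}$. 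I would \emph{define} $\sigma_1^\infty := \chi_1(\infty)/(N_1-\tau) = \sigma_1 + q(\infty)/(N_1-\tau)$; for $\eps_0$ small this lies in $(1,\infty)$ and $|\sigma_1^\infty/\sigma_1-1|\le C\eps$. Plugging these limits into the d'Alembert formulas yields \emph{unconditional} bounds on all of $\Omega$:
\[
|\bar\chi_s-\tau|+|\bar\chi_t-\sigma_1^\infty(N_1-\tau)|\le CB\,(s-t)^{-r},\qquad |\bar\chi_{ss}|+|\bar\chi_{st}|\le CB\,(s-t)^{-(r+1)}.
\]

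For global existence, the shock satisfies $\sigma'=\bar\chi_t(\sigma,t)/(N_1-\bar\chi_s(\sigma,t))$ with $\sigma(0)=\sigma_0$; since $\sigma'>1$ forces $\sigma(t)\ge\sigma_0+t$, one has $(\sigma(t),t)\in\Omega$ with $\sigma(t)-t\ge\sigma_0$, so the displayed bounds give $\bar\chi_s=\tau+O(B\sigma_0^{-r})$ and $\bar\chi_t=\sigma_1^\infty(N_1-\tau)+O(B\sigma_0^{-r})$ along the shock. Hence for $\eps_0$ small: $\eta<\bar\chi_s<N_1$ on $[\sigma_0+t,\infty)$ for every $t$; $\sigma'\le M<\infty$ uniformly, so $T_+<\infty$ would give $\sigma(T_+)<\infty$; and $\bar\chi_t(\sigma,t)/(N_1-\bar\chi_s(\sigma,t))\ge\sigma_1-C\eps>1$. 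Thus all three hypotheses of Proposition~\ref{p:cont_criterion} hold at $T_+$, so the solution extends past $T_+$ --- a contradiction --- and $T_+=T(0)=\infty$. The same smallness also yields the \emph{uniform} lower bound $\sigma'(t)\ge\sigma_1-C\eps\ge 1+c$ with $c:=(\sigma_1-1)/2$, whence $\sigma(t)-t\ge\sigma_0+ct\gtrsim 1+t$ and $s/(s-t)\le\sigma(t)/(\sigma(t)-t)\le 1+1/c$ on $\cl E_\sigma$. Feeding these into the two displayed bounds gives the first two lines of \eqref{eq:asymptotic} (the weighted second-derivative estimate since $s^{r+1}|\chi_{ss}|\le CB(s/(s-t))^{r+1}$, the stretch/velocity decay since $(s-t)^{-r}\le(\sigma(t)-t)^{-r}\lesssim(1+t)^{-r}$); the shock-speed estimate follows by writing $\sigma'/\sigma_1^\infty-1$ as a quotient with numerator $O(B(1+t)^{-r})$ and denominator $\ge(N_1-\tau)/2$; and the tension estimate on $[0,\sigma(t)]$ follows from \eqref{eq:inext_definition}, inserting $\sigma'=\sigma_1^\infty(1+O(B(1+t)^{-r}))$ and $\bar\chi_s(\sigma^+,t)=\tau+O(B(1+t)^{-r})$ and dividing by $\tau+(\sigma_1^\infty)^2(N_1-\tau)\ge\tau>0$.

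The main obstacle is the uniform-in-time lower bound $\sigma'(t)\ge 1+c$: this is what upgrades the crude bound $(s-t)^{-r}\le\sigma_0^{-r}$ to genuine decay $(1+t)^{-r}$ and keeps $s/(s-t)$ bounded on $\cl E_\sigma$ (needed for the weighted second-derivative control). The argument avoids circularity because the d'Alembert bounds on $\Omega$ depend only on the data; the condition $\sigma'>1$ (built into Definition~\ref{d:diss_solution}) is used merely to place $\cl E_\sigma$ inside $\Omega$, after which the quantitative improvement $\sigma'\ge 1+c$ is automatic once $\eps_0$ is small. A secondary point to handle carefully is the correct identification $\sigma_1^\infty=\chi_1(\infty)/(N_1-\tau)$, which genuinely differs from $\sigma_1$ unless $\chi_1\to\sigma_1(N_1-\tau)$ at infinity --- the hypothesis only forces $\chi_1'(\infty)=0$.
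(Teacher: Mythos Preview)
Your proposal is correct and follows essentially the same approach as the paper: both arguments write the extensible solution via d'Alembert on $\{s\ge\sigma_0+t\}$, define $\sigma_1^\infty=\chi_1(\infty)/(N_1-\tau)$ from the integrability of $\chi_1'$, read off the $(s-t)^{-r}$ and $(s-t)^{-(r+1)}$ bounds on first and second derivatives, use Proposition~\ref{p:cont_criterion} for $T_+=\infty$, and then exploit the quantitative lower bound $\sigma'-1\ge c>0$ to convert $(s-t)^{-r}$ into $(1+t)^{-r}$ and to bound $s/(s-t)$ uniformly on $\cl E_\sigma$. Your write-up is in fact slightly more careful than the paper's in one place: you explicitly note that the uniform upper bound $\sigma'\le M$ forces $\sigma(T_+)<\infty$ when $T_+<\infty$, which is the hypothesis \eqref{eq:ext_assumption} needed to invoke the continuation criterion.
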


\begin{proof}
We first note that by \eqref{eq:pertubation_g0} and the fundamental theorem of calculus,
\begin{align}
	|\chi_0'(s) - \tau | = \left |
	\int_s^\infty \chi_0''(\xi) d\xi 
	\right | \leq \frac{B}{r} s^{-r},
\end{align}
$\sigma_1^\infty := \frac{1}{N_1 -\tau}\lim_{s \rar \infty} \chi_1(s)$ exists
and 
\begin{align}
	|\chi_1(s) - \sigma_1^\infty(N_1 - \tau)| \leq \frac{B}{r} s^{-r},
\end{align}
for all $s \in [\sigma_0, \infty)$. In particular, by \eqref{eq:pertubation_g0}, 
\begin{align}
	|\sigma_1(N_1 - \tau) - \sigma_1^\infty(N_1 - \tau)| = |\sigma_1(N_1 - \tau) - \chi_1(\infty)| \leq \eps,
\end{align}
proving $\sigma_1^\infty \in (1,\infty)$ and 
\begin{align}
\Bigl | \frac{\sigma_1^\infty}{\sigma_1} - 1  \Bigr | \leq \frac{\eps}{N_1 - \tau}
\label{eq:sigmainfinity_bound}
\end{align}
for all $\epsilon$ sufficiently small. 	
	
Let $(\chi,N)$ be the unique solution to the equations of motion with shock front $\sigma$ and initial data $(\chi_0, \chi_1, N_0)$ defined on a maximal time interval of existence $[0,T_+)$.
Let $\bar \chi$ be the unique $C^2$ function on $\cl D_{\sigma_0} := \{(s,t) : t \in [0,\infty), s \in [\sigma_0 + t, \infty)\}$ solving the wave equation
\begin{align}
\begin{split}
&\bar \chi_{tt}(s,t) = \bar \chi_{ss}(s,t),  \\
&\bar \chi_s(\infty,t) = \tau, 
\end{split}\label{eq:wave_equation}\\
&\bar \chi(s,0) = \chi_0(s), \bar \chi_t(s,0) = \chi_1(s).
\end{align}
By D'Alembert's formula: 
\begin{align}
	\bar \chi(s,t) = \frac{1}{2} \Bigl (
	\chi_0(s+t) + \chi_0(s-t)
	\Bigr ) + \frac{1}{2} \int_{s-t}^{s+t} \chi_1(\xi) d\xi. \label{eq:dalembertbar}
\end{align}
By finite speed of propagation, 
\begin{align}
\chi |_{\cl E_{\sigma}} = \bar \chi |_{\cl E_{\sigma}}, \label{eq:finite_speed} 
\end{align}
and thus, the shock front $\sigma$ is the unique $C^2$ solution to 
\begin{align}
\sigma'(t) = \frac{\bar \chi_t(\sigma,t)}{N_1 - \bar \chi_s(\sigma,t)}, \quad \sigma(0) = \sigma_0. \label{eq:ode}
\end{align} 

By D'Alembert's formula we have for all $t \in [0,\infty), s \in [\sigma(t), \infty)$ 
\begin{align}
	|\bar \chi_s(s,t)& - \tau| \\ &=
	\Bigl |
	\frac{1}{2} (\chi'_0(s+t) - \tau) + \frac{1}{2} (\chi'_0(s+t) - \tau) \\
	&\qquad + \frac{1}{2} (\chi_1(s+t) - \sigma_1^\infty(N_1 - \tau)) - 
	\frac{1}{2}(\chi_1(s-t) - \sigma_1^\infty(N_1 - \tau))
	\Bigr | \\
	&\leq \frac{2B}{r} (\sigma - t)^{-r}, \label{eq:chis_estimate}
\end{align}
and similarly 
\begin{align}
	|\bar \chi_t(s,t) - \sigma_1^\infty(N_1 - \tau)| 
	&\leq \frac{2B}{r} (\sigma - t)^{-r}, \label{eq:chit_estimate} \\
	s^{r+1}[|\bar \chi_{ss}(s,t)| + |\bar \chi_{st}(s,t)|] &\leq B ( 1 + 
	(s-t)^{-r-1}s^{r+1} ).
	\label{eq:chiss_estimate} 
\end{align}

Since $\sigma - t \geq \sigma_0$, we conclude from \eqref{eq:pertubation_g0}, \eqref{eq:chis_estimate} and \eqref{eq:chit_estimate} that there exists $C_1 > 0$ such that for all $\eps$ sufficiently small, for all $(s,t) \in \cl D_{\sigma_0}$
\begin{align}
\eta < \tau - C_1\eps < \bar \chi_s(s,t) < \tau + C_1\eps < N_1, \\
\sigma_1^\infty(1 - C_1\eps) < 
\frac{\bar \chi_s(\sigma,t)}{N_1 - \bar \chi_s(\sigma,t)} < \sigma_1^\infty (1 + C_1 \eps). \label{eq:sigmadot_bound1}
\end{align}
By Proposition \ref{p:cont_criterion}, it follows that $T_+ = T(0) = \infty$. 

By \eqref{eq:sigmainfinity_bound}, \eqref{eq:ode} and \eqref{eq:sigmadot_bound1}
\begin{align}
	\sigma - t &= \sigma_0 + \int_0^t \sigma'(\bar t) d\bar t - t \\
	&\geq \sigma_0 + \sigma_1\Bigl ( 1 - \frac{\eps}{N_1 - \tau} \Bigr )(1 - C_1\eps) t - t \\
	&\geq \sigma_0 + \frac{1}{2} (\sigma_1 - 1)t
\end{align}
for all $\epsilon$ sufficiently small. A similar argument also proves there exists a constant $c > 0$  depending on $\sigma_0$ and $\sigma_1$ such that
$(1 - c)\sigma \geq t$ and thus for all $s \in [\sigma,\infty)$
\begin{align}
(s-t)^{-r-1}s^r \leq c^{-r}, 
\end{align}
for all $\epsilon$ sufficiently small. 
Thus, by \eqref{eq:finite_speed}, \eqref{eq:chis_estimate}, \eqref{eq:chit_estimate} and \eqref{eq:chiss_estimate} we conclude there exists $C > 0$ such that for all $s \in [\sigma,\infty)$
\begin{align}
	s^{r+1} [|\chi_{ss}(s,t)| + |\chi_{s,t}(s,t)|] &\leq C B, \\
	|\chi_s(s,t) - \tau| + 	|\chi_t(s,t) - \sigma_1^\infty(N_1 - \tau)| 
	&\leq C B (1 + t)^{-r}.
\end{align}
This proves the first two estimates in \eqref{eq:asymptotic} which then immediately imply the third and fourth estimates in \eqref{eq:asymptotic} via \eqref{eq:ode} and \eqref{eq:inext_definition}. 
\end{proof}

\begin{thm}\label{t:asymptotic_stability_poszeta}
	Assume $\zeta > 0$. Let $(\sigma_0, \sigma_1) \in (0,\infty) \times (1,\infty)$.  There exist $\eps_0  > 0$ and $C > 0$ such that for all $\eps < \eps_0$, the following is true. Suppose  
	$(\chi_0,\chi_1,N_0)$ is a set of initial data with initial shock front $\sigma_0$ such that there exists $r > 0$ such that 
	\begin{align}
		B:= \sup_{s \in [\sigma_0,\infty)} \Bigl [	| &\chi_0'(s) - \tau | + s^{r+2}|\chi_0''(s)| \\
		&+
		| \chi_1(s) - \sigma_1(N_1 - \tau) | + 
		s^{r+2} |\chi_1'(s)| \Bigr ]
		< \eps. \label{eq:pertubation_g02}
	\end{align}
Then the unique solution $(\chi,N)$ to the equations of motion with initial data $(\chi_0, \chi_1, N_0)$ satisfies $T_+ = T(\zeta)$ and 
the state variables and shock speed remain close to and asymptotically approach those of a piece-wise constant stretched motion as $t \rar T(\zeta)$: there exist $\sigma_1^\infty \in (1,\infty)$ such that $\bigl |\frac{\sigma_1^\infty}{\sigma_1} - 1\bigr | \leq C \eps$ and for all $t \in [0,T_+(\zeta))$ 
\begin{align}
	\begin{split}
			\sup_{s \in [\sigma(t), \infty)}& s^{r+2} [ |\chi_{ss}(s,t)| + |\chi_{st}(s,t)| ] \leq C B, \\
		\sup_{s \in [\sigma(t), \infty)}& \Bigl [ |\chi_s(s,t) - (\zeta t + \tau)| + 
		 |\chi_t(s,t) - [\zeta(s - \sigma_0)+\sigma_1^\infty (N_1 - \tau)]|
		 \Bigr ] \\
		&\qquad \leq C B (T(\zeta) - t)^{1+r}, \\
				&\left |  \sigma'(t)\Bigl (\frac{\zeta(\sigma - \sigma_0) + \sigma_1^\infty(N_1 - \tau)}{N_1 - (\zeta t + \ta)}\Bigr )^{-1} - 1
		\right | \leq C B (T(\zeta) - t)^{r}, \\
		\sup_{s \in [0,\sigma(t)]}&
		\left |
		N(s,t)\Bigl (\frac{[\zeta(\sigma - \sigma_0) + \sigma^\infty_1(N_1 - \tau)]^2}{N_1 - (\zeta t + \tau)} \Bigr )^{-1} - 1
		\right | \leq C B (T(\zeta) - t)^r.
		\end{split}\label{eq:asymptotic2} 
\end{align}
\end{thm}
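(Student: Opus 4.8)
The plan is to follow the scheme of the proof of Theorem~\ref{t:asymptotic_stability}, reducing to a wave equation on an enlarged domain via finite speed of propagation and to an ODE for the shock front, but now with a genuinely coupled analysis of $\sigma$ and the perturbation. Let $(\chi,N)$ be the solution with shock front $\sigma$ and maximal interval $[0,T_+)$, and let $\bar\chi$ be the unique $C^2$ solution of the wave equation \eqref{eq:ext_equations1} on $\cl D_{\sigma_0}:=\{(s,t):t\ge0,\ s\ge\sigma_0+t\}$, given by D'Alembert's formula \eqref{eq:dalembertbar}. Since $\sigma'>1$ for any shock solution, $\sigma(t)-t$ is nondecreasing, so $\cl E_\sigma\subset\cl D_{\sigma_0}$ and $\chi|_{\cl E_\sigma}=\bar\chi|_{\cl E_\sigma}$; hence $\sigma$ solves $\sigma'=\bar\chi_t(\sigma,t)/(N_1-\bar\chi_s(\sigma,t))$, $\sigma(0)=\sigma_0$. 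Two things are new relative to $\zeta=0$. First, we must keep $\eta<\bar\chi_s(s,t)<N_1$ for \emph{all} $s\ge\sigma(t)$, not only at $s=\sigma(t)$: otherwise the extensible segment develops a second inextensible piece, which is exactly the mechanism in the example preceding the theorem and the reason the spatial weight is strengthened to $s^{r+2}$. Second, the relevant small quantity is now $N_1-(\zeta t+\tau)=\zeta(T(\zeta)-t)$, which degenerates as $t\to T(\zeta)$, while $\sigma(t)\to\infty$.

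Next I record the D'Alembert estimates. As in the $\zeta=0$ case, the fundamental theorem of calculus and \eqref{eq:pertubation_g02} produce $\sigma_1^\infty\in(1,\infty)$ with $|\sigma_1^\infty/\sigma_1-1|\lesssim\eps$, defined so that $g(\xi):=\chi_1(\xi)-[\zeta(\xi-\sigma_0)+\sigma_1^\infty(N_1-\tau)]\to0$; writing also $h(\xi):=\chi_0'(\xi)-\tau$, one has $|h(\xi)|+|g(\xi)|\lesssim B\xi^{-r-1}$ and $|h'(\xi)|+|g'(\xi)|\lesssim B\xi^{-r-2}$. Since $\tau$, $\zeta(\cdot-\sigma_0)+\sigma_1^\infty(N_1-\tau)$ and $\zeta t+\tau$ are the initial stretch, initial velocity, and time-$t$ stretch of the piece-wise constant motion with parameters $(\sigma_0,\sigma_1^\infty)$, differentiating \eqref{eq:dalembertbar} gives, for all $(s,t)\in\cl D_{\sigma_0}$,
\begin{align}
|\bar\chi_s(s,t)-(\zeta t+\tau)|+\bigl|\bar\chi_t(s,t)-[\zeta(s-\sigma_0)+\sigma_1^\infty(N_1-\tau)]\bigr|&\lesssim B(s-t)^{-r-1},\\
|\bar\chi_{ss}(s,t)|+|\bar\chi_{st}(s,t)-\zeta|&\lesssim B(s-t)^{-r-2}.
\end{align}

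The core of the argument is a continuity/bootstrap estimate for $\sigma$. Let $\sigma^\flat(t):=\sigma_0+\sigma_1^\infty(N_1-\tau)t/(N_1-(\zeta t+\tau))$ be the shock of the piece-wise constant motion with parameters $(\sigma_0,\sigma_1^\infty)$ (Proposition~\ref{p:solutions_g0}), and set $a(t):=(\sigma(t)-\sigma_0)(N_1-\zeta t-\tau)$, which for $\sigma^\flat$ is the \emph{affine} function $\sigma_1^\infty(N_1-\tau)t$. Writing $\bar\chi_s(\sigma,t)=\zeta t+\tau+\e_s(t)$ and $\bar\chi_t(\sigma,t)=\zeta(\sigma-\sigma_0)+\sigma_1^\infty(N_1-\tau)+\e_t(t)$ with $|\e_s|+|\e_t|\lesssim B(\sigma(t)-t)^{-r-1}$, the shock ODE becomes $a'(t)=\sigma_1^\infty(N_1-\tau)+\e_t(t)+\sigma'(t)\e_s(t)$. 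I would run the bootstrap on every $[0,T']$, $T'<T(\zeta)$, under the assumption $|\sigma(t)-\sigma^\flat(t)|\le\tfrac12(\sigma^\flat(t)-t)$. The elementary fact that makes it close is that $(\sigma^\flat(t)-t)(N_1-\zeta t-\tau)=\sigma_1^\infty(N_1-\tau)t+(\sigma_0-t)(N_1-\zeta t-\tau)$ is a polynomial in $t$, strictly positive on $[0,T(\zeta)]$ (positive on $[0,T(\zeta))$ since $\sigma^\flat-t\ge\sigma_0$ and $N_1-\zeta t-\tau>0$, and equal to $\sigma_1^\infty(N_1-\tau)^2/\zeta>0$ at $t=T(\zeta)$), hence bounded above and below by positive constants; so under the bootstrap, $\sigma(t)-t$ is comparable to $(N_1-\zeta t-\tau)^{-1}$ and $|\e_s|+|\e_t|\lesssim B(N_1-\zeta t-\tau)^{r+1}$. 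Using $\sigma'\lesssim(N_1-\zeta t-\tau)^{-2}$ and $\int_0^{T(\zeta)}(N_1-\zeta t-\tau)^{r-1}\,dt<\infty$, integration of the equation for $a$ yields $|a(t)-\sigma_1^\infty(N_1-\tau)t|\lesssim B$ uniformly in $t$, hence $|\sigma(t)-\sigma^\flat(t)|\lesssim B(N_1-\zeta t-\tau)^{-1}\lesssim B(\sigma^\flat(t)-t)$, which improves the assumption for $\eps_0$ small. The same bound $|\e_s|\lesssim B(N_1-\zeta t-\tau)^{r+1}=o(N_1-\zeta t-\tau)$, together with $s-t\ge\sigma(t)-t$, shows $\eta<\bar\chi_s(s,t)<N_1$ for all $s\ge\sigma(t)$ (no second inextensible segment) and $\sigma'>1$. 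By Proposition~\ref{p:cont_criterion} the solution therefore extends past each $T'<T(\zeta)$, and with the a priori bound $T_+\le T(\zeta)$ recorded before the theorem we get $T_+=T(\zeta)$.

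Finally, the estimates \eqref{eq:asymptotic2} follow by substitution. The bootstrap output gives $\sigma(t)-t$ comparable to $(T(\zeta)-t)^{-1}$, and $\sup_{s\ge\sigma(t)}s/(s-t)$ bounded on $[0,T(\zeta))$ (the sup is attained at $s=\sigma(t)$, and $\sigma(t)/(\sigma(t)-t)$ extends continuously to $t=T(\zeta)$). Combined with the D'Alembert estimates and $\chi|_{\cl E_\sigma}=\bar\chi|_{\cl E_\sigma}$ this produces the first two lines of \eqref{eq:asymptotic2}; the third and fourth follow from the shock ODE and the Rankine--Hugoniot formula \eqref{eq:inext_definition} for $N$ on $\cl I_\sigma$, inserting the bounds on $\e_s,\e_t$ and the comparability $\sigma(t)-t\sim(T(\zeta)-t)^{-1}$. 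The main obstacle is the bootstrap step: one must simultaneously control the runaway shock and a perturbation whose size depends on the distance $\sigma(t)-t$ from the characteristic through $(\sigma_0,0)$, and the decisive quantitative point --- that along the shock the perturbation decays like $(N_1-\zeta t-\tau)^{r+1}$, strictly faster than the degenerating margin $N_1-\zeta t-\tau$ --- is precisely what the $s^{r+2}$ weight buys and what the substitution $a=(\sigma-\sigma_0)(N_1-\zeta t-\tau)$ makes visible by linearizing the reference shock ODE.
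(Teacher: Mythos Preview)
Your proof is correct and follows the same overall architecture as the paper: reduce to $\bar\chi$ on $\cl D_{\sigma_0}$ via D'Alembert, run a bootstrap tying the shock location to the perturbation size, extract the key comparability $(\sigma(t)-t)(T(\zeta)-t)\gtrsim 1$, and conclude via Proposition~\ref{p:cont_criterion}. The implementation of the bootstrap differs, however. The paper bootstraps on the \emph{relative error in $\sigma'$}, setting $\delta(t):=\sigma'(t)\bigl(\tfrac{\zeta(\sigma-\sigma_0)+\sigma_1^\infty(N_1-\tau)}{N_1-(\zeta t+\tau)}\bigr)^{-1}-1$, and then treats the resulting linear ODE for $\sigma-\sigma_0$ by an integrating factor $\exp(-f(t))$, obtaining an explicit formula for $\sigma-\sigma_0$ in terms of $g(t)=\int_0^t \zeta\delta/(N_1-\zeta\bar t-\tau)\,d\bar t$; the bound $|g|\lesssim\eps$ then yields $(T(\zeta)-t)(\sigma-t)\geq \tfrac12 T(\zeta)\min\{\sigma_0,T(\zeta)(\sigma_1^\infty-1)\}$ directly. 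You instead bootstrap on $|\sigma-\sigma^\flat|$ and introduce the desingularized variable $a(t)=(\sigma-\sigma_0)(N_1-\zeta t-\tau)$, for which the reference dynamics is exactly affine; the algebraic identity $\sigma'(N_1-\zeta t-\tau)=\zeta(\sigma-\sigma_0)+\sigma_1^\infty(N_1-\tau)+\e_t+\sigma'\e_s$ makes the perturbative structure transparent and replaces the integrating-factor computation by a single integration. Your observation that $(\sigma^\flat-t)(N_1-\zeta t-\tau)$ is a quadratic polynomial positive on the closed interval $[0,T(\zeta)]$ is the clean counterpart of the paper's lower bound on $(T(\zeta)-t)(\sigma-t)$. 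The paper's choice has the minor advantage that the third line of \eqref{eq:asymptotic2} is literally the bootstrap hypothesis, whereas you recover it afterwards from the shock ODE; conversely, your substitution isolates more clearly why the weight $s^{r+2}$ (giving $(\sigma-t)^{-r-1}$ decay, hence $(T(\zeta)-t)^{r+1}$ along the shock) is exactly what is needed to beat both the degenerating margin $N_1-(\zeta t+\tau)$ and the integrability threshold in $\int\sigma'|\e_s|\,dt$.
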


\begin{proof}
As in the previous proof, it follows from \eqref{eq:pertubation_g02} and the fundamental theorem of calculus that there for all $\epsilon$ sufficiently small, there exists $\sigma_1^\infty \in (1,\infty)$ with $|\frac{\sigma_1^\infty}{\sigma_1} - 1| \leq \frac{\eps}{N_1 -\tau}$ such that for all $s \in [\sigma_0,\infty)$ 
\begin{align}
\begin{split}
|\chi_0'(s) - \tau| &\leq \frac{B}{r+1} s^{-1-r}, \\
|\chi_1(s) - (\zeta(s- \sigma_0) + \sigma_1^\infty(N_1 - \tau))| &\leq \frac{B}{r+1} s^{-1-r}.
\end{split}\label{eq:data_pert} 
\end{align} 
In particular, there exists $C_1 > 0$ depending on $\sigma_0, \sigma_1, \zeta$ and $N_1 - \tau$ such that 
\begin{align}
\Bigl |
\frac{\sigma'(0)}{\sigma_1^\infty} - 1
\Bigr | \leq C_1 B T(\zeta)^{r} \label{eq:initial_sigmaprime_pert}
\end{align}
Let $(\chi,N)$ be the unique solution to the equations of motion with shock front $\sigma$ and initial data $(\chi_0, \chi_1, N_0)$ defined on a maximal time interval of existence $[0,T_+)$.
Let $\bar \chi$ be the unique $C^2$ function on $\cl D_{\sigma_0} := \{(s,t) : t \in [0,T(\zeta)), s \in [\sigma_0 + t, \infty)\}$ solving the wave equation
\begin{align}
	\begin{split}
		&\bar \chi_{tt}(s,t) = \bar \chi_{ss}(s,t),  \\
		&\bar \chi_s(\infty,t) = \zeta t + \tau, 
	\end{split}\label{eq:wave_equation2}\\
	&\bar \chi(s,0) = \chi_0(s), \bar \chi_t(s,0) = \chi_1(s).
\end{align}
As before, using \eqref{eq:data_pert} and D'Alembert's formula, we conclude the bounds for all $t \in [0, T(\zeta)), s \in [\sigma(t),\infty)$,
\begin{align}
\begin{split}
|\bar \chi_s(s,t) - (\zeta t + \tau)| &\leq \frac{2B}{r+1}(\sigma - t)^{-1-r}, \\
|\bar \chi_t(s,t) - [\zeta(s-\sigma_0) + \sigma_1^\infty(N_1 - \tau)]| &\leq \frac{2B}{r+1}(\sigma - t)^{-1-r}, \\ 
s^{r+1}[|\bar \chi_{ss}(s,t)| + |\bar \chi_{st}(s,t)|] &\leq B ( 1 + 
(s-t)^{-r-2}s^{r+2} ), 
\end{split}\label{eq:chi_pert}
\end{align}

To control the shock speed and state variables, we proceed via a bootstrap argument. We claim that there exists a constant $C_0 \geq C_1$ depending only on $\sigma_0,\sigma_1,\zeta$ and $N_1 - \tau$ such that for all $\epsilon$ sufficiently small, the following is true: if $T < T_+$ and for all $t \in [0,T]$ we have 
\begin{align}
	\left |
	\sigma'(t)\Bigl (\frac{\zeta(\sigma(t) - \sigma_0) + \sigma_1^\infty(N_1 - \tau)}{N_1 - (\zeta t + \ta)}\Bigr )^{-1} - 1
	\right | \leq 2 C_0 B (T(\zeta) - t)^{r}, \label{eq:bootstrap}
\end{align}
then for all $t \in [0,T]$ we have 
\begin{align}
	\left |
	\sigma'(t)\Bigl (\frac{\zeta(\sigma(t) - \sigma_0) + \sigma_1^\infty(N_1 - \tau)}{N_1 - (\zeta t + \ta)}\Bigr )^{-1} - 1
	\right | \leq C_0 B (T(\zeta) - t)^{r}. \label{eq:boot_conclusion}
\end{align}
Assume that \eqref{eq:bootstrap} holds with $C_0$ to be determined. Let 
\begin{align}
	\delta(t) &:= \sigma'(t)\Bigl (\frac{\zeta(\sigma - \sigma_0) + \sigma_1^\infty(N_1 - \tau)}{N_1 - (\zeta t + \ta)}\Bigr )^{-1}- 1, \\
	f(t) &:= 
	\int_0^t \frac{\zeta (1 +  \delta(\bar t))}{N_1 - (\zeta \bar t + \tau)} d \bar t,
\end{align}
so 
\begin{align}
	f'(t) = \frac{\zeta(1 + \delta(t))}{N_1 - (\zeta t + \tau)}.
\end{align}
Then 
\begin{align}
	(\sigma - \sigma_0)' - \frac{\zeta(1 +\delta(t))}{N_1 -(\zeta t + \tau)} (\sigma- \sigma_0) = \frac{\sigma_1^\infty(N_1 -\tau)}{N_1 - (\zeta t + \tau)}(1 + \delta(t))
\end{align}
which implies 
\begin{align}
	\frac{d}{dt}[(\sigma - \sigma_0)\exp(-f(t))] = \frac{\sigma_1^\infty(N_1 - \tau)}{-\zeta} \frac{d}{dt} \exp(-f(t)).
\end{align}
Integrating, we conclude that 
\begin{align}
\sigma - \sigma_0 &= \frac{\sigma_1^\infty(N_1 - \tau)}{\zeta}(\exp f(t) - 1) \\
&= \frac{\sigma_1^\infty(N_1 - \tau)}{\zeta (N_1 - (\zeta t + \tau))}
\bigl (
(N_1 - \tau) \exp g(t) - (N_1 - (\zeta t + \tau))
\bigr ) \\
&= \frac{\sigma_1^\infty T(\zeta)}{T(\zeta) - t}
\bigl (t + 
T(\zeta) (\exp g(t) - 1)
\bigr )
\end{align}
where 
\begin{align}
	g(t) := \int_0^t \frac{\zeta \delta(\bar t)}{N_1 - (\zeta \bar t + \tau)} d \bar t
\end{align}
satisfies, via \eqref{eq:bootstrap}, 
\begin{align}
|g(t)| \leq \frac{2 C_0 (T(\zeta))^r}{r} \eps. \label{eq:g_bound}
\end{align}
Then for all $\epsilon < \frac{r}{8C_0 \sigma_1^\infty T(\zeta)^{r+1}}\min \{\sigma_0, T(\zeta)(\sigma_1^\infty - 1)\}$, 
\begin{align}
(T(\zeta) - t)(\sigma - t) &= \sigma_0(T(\zeta) - t) + \sigma_1^\infty T(\zeta)(t + 
T(\zeta)(\exp g(t) - 1)) \\&\quad - T(\zeta)t + t^2  \\
&\geq \sigma_0(T(\zeta) - t) + T(\zeta)(\sigma_1^\infty - 1)t + \sigma_1^\infty T(\zeta)^2 (\exp g(t) - 1) \\
&\geq T(\zeta) \min \{\sigma_0, T(\zeta)(\sigma_1^\infty - 1) \} - \frac{4 \sigma_1^\infty T(\zeta)^{r+2} C_0}{r} \eps \\
&\geq \frac{1}{2} T(\zeta) \min \{\sigma_0, T(\zeta)(\sigma_1^\infty - 1) \}
\end{align}
Thus, for all $p > 0$
\begin{align}
	(\sigma - t)^{-p} \leq \Bigl ( \frac{1}{2}
	T(\zeta) \min \{\sigma_0, T(\zeta)(\sigma_1^\infty - 1)
	\} 
	\Bigr )^{-p} (T(\zeta) - t)^{-p}. \label{eq:sigmat_relation}
\end{align}
Inserting \eqref{eq:sigmat_relation} into \eqref{eq:chi_pert} implies that there exists an explicit constant 
$C_2 > 0$ depending on $\sigma_0, \sigma_1, \zeta$ and $N_1 - \tau$ such that
\begin{align}
\begin{split}
\left | \frac{\bar \chi_t(\sigma, t)}{\zeta(\sigma - \sigma_0) + \sigma_1^\infty(N_1 - \tau)} - 1 \right | &\leq C_2 B (T(\zeta) - t)^{1 + r}, \\
\left | \frac{N_1 - \bar \chi_s(\sigma, t)}{N_1 - (\zeta t + \tau)} - 1 \right | &\leq C_2 B (T(\zeta) - t)^{r}. 
\end{split}\label{eq:chi_pert_Tzeta}
\end{align} 
Thus, via the relation $\sigma' = \frac{\bar \chi_t(\sigma,t)}{N_1 - \bar \chi_s(\sigma,t)}$ we conclude that there exists an explicit constant 
$C_3 > 0$ depending on $\sigma_0, \sigma_1, \zeta$ and $N_1 - \tau$ such that for all $\eps$ sufficiently small 
\begin{align}
	\left |
\sigma'(t)\Bigl (\frac{\zeta(\sigma - \sigma_0) + \sigma_1^\infty(N_1 - \tau)}{N_1 - (\zeta t + \ta)}\Bigr )^{-1} - 1
\right | \leq C_3 B (T(\zeta) - t)^{r}
\end{align}
If we define $C_0 := \max\{C_1, C_3\}$, then we have proved that \eqref{eq:bootstrap} implies \eqref{eq:boot_conclusion} for all $\epsilon < \frac{r}{8C_0 \sigma_1^\infty T(\zeta)^{r+1}}\min \{\sigma_0, T(\zeta)(\sigma_1^\infty - 1)\}$ sufficiently small. This proves the claim. 

By the claim and a continuity argument, if follows that the bound \eqref{eq:bootstrap} and its consequences hold on $[0,T_+)$ for all $\epsilon$ sufficiently small. By \eqref{eq:bootstrap}, \eqref{eq:chi_pert_Tzeta} and Proposition \ref{p:cont_criterion}, it follows that $T_+ = T(\zeta)$ for all $\epsilon$ sufficiently small. Then \eqref{eq:bootstrap}, \eqref{eq:chi_pert}, \eqref{eq:sigmat_relation}, \eqref{eq:inext_definition} and the fact $\chi |_{\cl E_\sigma} = \bar \chi|_{\cl E_{\sigma}}$ immediately yield \eqref{eq:asymptotic2} for all $\eps$ sufficiently small. 
\end{proof}

\bibliographystyle{plain}
\bibliography{researchbibmech}

\begin{thebibliography}{10}

\bibitem{Antman79}
Stuart~S. Antman.
\newblock Multiple equilibrium states of nonlinearly elastic strings.
\newblock {\em SIAM J. Appl. Math.}, 37(3):588--604, 1979.

\bibitem{AntmanBook}
Stuart~S. Antman.
\newblock {\em Nonlinear problems of elasticity}, volume 107 of {\em Applied
  Mathematical Sciences}.
\newblock Springer, New York, second edition, 2005.

\bibitem{BulMalekRajWal_Existence15}
Miroslav Bul\'{\i}\v{c}ek, Josef M\'{a}lek, K.~R. Rajagopal, and Jay~R. Walton.
\newblock Existence of solutions for the anti-plane stress for a new class of
  ``strain-limiting'' elastic bodies.
\newblock {\em Calc. Var. Partial Differential Equations}, 54(2):2115--2147,
  2015.

\bibitem{BustRaj_Elasto13}
R.~Bustamante and K.~R. Rajagopal.
\newblock On a new class of electroelastic bodies. i.
\newblock {\em Proceedings of the Royal Society A: Mathematical, Physical and
  Engineering Sciences}, 469(2149):20120521, 2013.

\bibitem{BustRaj_Magneto15}
R.~Bustamante and K.~R. Rajagopal.
\newblock Implicit constitutive relations for nonlinear magnetoelastic bodies.
\newblock {\em Proceedings of the Royal Society A: Mathematical, Physical and
  Engineering Sciences}, 471(2175):20140959, 2015.

\bibitem{RajBust20}
Roger Bustamante and Kumbakonam Rajagopal.
\newblock A review of implicit constitutive theories to describe the response
  of elastic bodies.
\newblock In {\em Constitutive modelling of solid continua}, volume 262 of {\em
  Solid Mech. Appl.}, pages 187--230. Springer, Cham, 2020.

\bibitem{Dafermos_HypCons16}
Constantine~M. Dafermos.
\newblock {\em Hyperbolic conservation laws in continuum physics}, volume 325
  of {\em Grundlehren der Mathematischen Wissenschaften [Fundamental Principles
  of Mathematical Sciences]}.
\newblock Springer-Verlag, Berlin, fourth edition, 2016.

\bibitem{GouMallRajWalton_PlaneCrack15}
K.~Gou, M.~Mallikarjuna, K.~R. Rajagopal, and J.~R. Walton.
\newblock Modeling fracture in the context of a strain-limiting theory of
  elasticity: a single plane-strain crack.
\newblock {\em Internat. J. Engrg. Sci.}, 88:73--82, 2015.

\bibitem{Lax_CPAM57}
P.~D. Lax.
\newblock Hyperbolic systems of conservation laws. {II}.
\newblock {\em Comm. Pure Appl. Math.}, 10:537--566, 1957.

\bibitem{Raj_Implicit03}
K.~R. Rajagopal.
\newblock On implicit constitutive theories.
\newblock {\em Appl. Math.}, 48(4):279--319, 2003.

\bibitem{RajElastElast}
K.~R. Rajagopal.
\newblock The elasticity of elasticity.
\newblock {\em Z. Angew. Math. Phys.}, 58(2):309--317, 2007.

\bibitem{RajConspectus}
K.~R. Rajagopal.
\newblock Conspectus of concepts of elasticity.
\newblock {\em Math. Mech. Solids}, 16(5):536--562, 2011.

\bibitem{Raj_GumMetal14}
K.~R. Rajagopal.
\newblock On the nonlinear elastic response of bodies in the small strain
  range.
\newblock {\em Acta Mech.}, 225(6):1545--1553, 2014.

\bibitem{Rod20}
Casey Rodriguez.
\newblock On stretch limited elastic strings, 2020.

\end{thebibliography}
\bigskip

\centerline{\scshape Casey Rodriguez}
\smallskip
{\footnotesize
 \centerline{Department of Mathematics, Massachusetts Institute of Technology}
\centerline{77 Massachusetts Ave, 2-246B, Cambridge, MA 02139, U.S.A.}
\centerline{\email{caseyrod@mit.edu}}
}

\end{document}